     \def\section{\@startsection{section}{1}%
     \z@{.7\linespacing\@plus\linespacing}{.5\linespacing}%
     {\bfseries
     \centering
     }}
     \def\@secnumfont{\bfseries}
\newtheorem{theorem}{Theorem}[section]
\newtheorem{lemma}[theorem]{Lemma}
\newtheorem{proposition}[theorem]{Proposition}
\newtheorem{corollary}[theorem]{Corollary}
\theoremstyle{definition}
\theoremstyle{remark}
\newtheorem{remark}[theorem]{Remark}
\numberwithin{equation}{section}
\begin{document}

\title[APPROXIMATIONS OF FRACTIONAL STOCHASTIC DIFFERENTIAL EQUATIONS]{APPROXIMATIONS OF FRACTIONAL STOCHASTIC DIFFERENTIAL EQUATIONS BY MEANS OF TRANSPORT PROCESSES*}

\author{Johanna Garz\'on}
\address{Johanna Garz\'on:  Department of Mathematics, CINVESTAV-IPN, Mexico City,
 Mexico}
\email{johanna@math.cinvestav.mx}

\author{Luis G. Gorostiza}
\thanks{* This research is supported by CONACYT grant 98998.}
\address{Luis G. Gorostiza: Department of Mathematics, CINVESTAV-IPN, Mexico City, Mexico }
\email{lgorosti@math.cinvestav.mx}

\author{Jorge A. Le\'on}
\address{Jorge A. Le\'on: Department of Automatic Control, CINVESTAV-IPN, Mexico City, Mexico }
\email{jleon@ctrl.cinvestav.mx}
\subjclass[2000] {Primary 60G22, Secondary 60H10}

\keywords{Fractional Brownian motion, stochastic differential equation, transport process, strong convergence, rate of convergence.}

\begin{abstract}
We present  strong approximations with  rate of convergence for the solution
of a stochastic differential equation of the form
$$
dX_t=b(X_t)dt+\sigma(X_t)dB^H_t,
$$
where $b\in C^1_b$, $\sigma \in C^2_b$, $B^H$ is fractional Brownian motion with Hurst index $H$, and we assume  existence of a unique solution with Doss-Sussmann representation. The results are based on a strong approximation of $B^H$ by means of transport processes of  Garz\'on et al (2009). If $\sigma$ is bounded away from $0$, an
 approximation is obtained by a general Lipschitz dependence result of R\"omisch and Wakolbinger (1985). Without that assumption on $\sigma$, that method does not work, and we proceed by means of Euler schemes on the Doss-Sussmann representation to obtain another approximation, whose proof is the bulk of the paper. 
\end{abstract}

\maketitle

\section{Introduction}

We consider one dimensional fractional 
 stochastic differential equations of the form
\begin{eqnarray}
\label{eq:1.1}
dX_t&=&b(X_t)dt+\sigma(X_t)dB^H_t,\quad t\in(0,T],\\
X_0&=&x_0,\nonumber
\end{eqnarray}
where $B^H$ is fractional Brownian motion with Hurst index $H$, and $b$ and $\sigma$ are continuous functions.
Equations of this type appear in several areas of application due to the properties of $B^H$ (see e.g. \cite{mis08} and references therein).
 $B^H=(B^H_t)_{t\geq 0}$ is defined for any $H\in (0,1)$ as a centered Gaussian process with covariance
$$
{\rm Cov}(B^H_s,B^H_t)=\frac{1}{2}(s^{2H}+t^{2H}-|s-t|^{2H}).
$$
We exclude the case $H=1/2$, which corresponds to  Brownian motion. The main properties of $B^H$ (for $H\neq 1/2$) are self-similarity, stationarity of increments, long range dependence, $k$-H\"older continuity of trajectories for $k<H$, and  it is neither a Markov process nor a semimartingale, hence the classical It\^o calculus cannot be used for this process. See \cite{nua03, sam94} for background.
There is now an extensive literature on fractional stochastic differential equations and applications. We restrict the references to a minimum for reasons of 
space.

In order to give a  precise meaning to the stochastic differential equation (\ref{eq:1.1}) it is necessary to define stochastic integral with respect to $B^H$ and to define solution, and then existence and uniqueness of solution can be proved under suitable conditions on $H,b$ and $\sigma$, in particular for the solution to have a 
Doss-Sussmann representation (given by equation (\ref{eq:2.9})). For example, this can be done for $H>1/3$, 
$b$ Lipschitz, and $\sigma\in C^2_b$. These questions have been treated  in \cite{nou08} (see also \cite{co07} and references therein). The paths of $B^H$ are increasingly irregular as $H$ decreases, and 
$H=1/4$ is a critical value for some problems (see e.g. 
\cite{cou00, fla09, gra03, kim08, nou09}). 
Hence the assumption $H>1/4$ is often made. In this case, under some conditions on $b$ and $\sigma$, a unique solution with Doss-Sussmann  representation has been established in \cite{alo01} for 
$1/4<H<1/2$ with Stratonovich integral, and in \cite{neu07} for $H>1/2$ with forward integral. In \cite{alo03} it is remarked that under some assumptions on the integrand, the Stratonovich integral coincides with the forward integral (see \cite{rus07} for those integrals). In this paper we take $b\in C^1_b$ and $\sigma \in C^2_b\;(C^i_b$ is the space of bounded functions with continuous bounded derivatives of order $\leq i)$, and we assume existence of a unique solution of (\ref{eq:1.1}) with Doss-Sussmann representation; this holds at least for $H>1/4$. The restriction $H>1/4$ is not used in our proofs, and it seems to be unnecessary if (\ref{eq:1.1}) is treated with the theory of rough paths; moreover, in that way it may also be possible to study strong transport approximations in the multidimensional case (see \cite{bar10, mar05, nua11, unt10} and references therein concerning the theory of rough paths and related fractional stochastic diferential equations).

In \cite{gar09} we obtained a strong approximation of $B^H$ with a rate of convergence by means of the Mandelbrot-van Ness representation of $B^H$ as a stochastic integral with respect to Brownian motion \cite{man68}, and a strong approximation of Brownian motion by transport processes \cite{goro80}. The result is that for each $q>0$ and each $\beta$ such that $|H-1/2|<\beta<1/2$, if $B^{n}$ is the $n$-th approximation of $B^H$ (defined in (\ref{eq:2.6})-(\ref{eq:2.7})), there is a positive constant $C$ such that 
\begin{equation}
\label{eq:1.2}
P\left(\sup_{0\leq t\leq T}|B^H_t-B^{n}_t|>C n^{-1/2+\beta}
(\log n)^{5/2}\right)=o(n^{-q})\quad{\rm as}\quad n\to\infty.
\end{equation}
Thus, the approximation becomes better when $H$ approaches 
${1}/{2}$. 

The aim of this paper is to show that under the above assumptions on $b$ and $\sigma,$ approximate solutions $X^n$ of equation (\ref{eq:1.1}) can be constructed by means of $B^n$, which strongly approximate the solution $X$ similarly as (\ref{eq:1.2}), with rate $n^{-1/2+\beta+\delta}(\log n)^{5/2}$, where 
$\delta>0$ is arbitrarily small, i.e., almost the same rate as (\ref{eq:1.2}). If $\sigma$ is bounded away from $0$, this can be done directly from (\ref{eq:1.2}) using a general Lipschitz dependence result obtained in \cite{rom85} (see also \cite{eng85}), without recourse to specifics of $B^n$. This is Theorem 3.1. Without that assumption on $\sigma$ the result of 
\cite{rom85} cannot be used (see Remark 4.1), and we proceed by  applying 
Euler schemes to the Doss-Sussmann representation of the solution. In this case, properties of $B^n$ are involved, and the approximation is somewhat different in the probability sense. This is  
Theorem 3.3, whose proof is the bulk of the paper.

We stress that the approximations of the solution of (\ref{eq:1.1}) by means of transport processes are of theoretical interest, but may not be useful for generating approximate solutions with computational efficiency. To that end there are  ad hoc methods (e.g. \cite{neu08} and references therein).

Section 2 contains background, Section 3 results, and Section 4 proofs.

\section{Background}

\subsection{Approximation of fractional Brownian motion by transport\\
 processes}

\

For each $n=1,2,\ldots$, a (uniform) transport process $Z^n=(Z^n(t))_{t\geq 0}$ represents the position on the real line at each time $t$ of a particle that starts from $0$ with velocity $+n$ or $-n$, with probability $1/2$ each, continues with that velocity during an exponentially distributed time with parameter $n^2$, at the end of which it changes velocity from $\pm n$ to $\mp n$, and so on the same way, changing  sign  at consecutive independent exponentially holding times. Such a process can be constructed from a given Brownian motion $B$ on a probability space using the Skorohod embedding, and it was shown in \cite{goro80} that $Z^n$ converges strongly to $B$ uniformly on a given bounded time interval  with rate $Cn^{-1/2}(\log n)^{5/2}$, as $n\to\infty$, where $C$ is a positive constant.

The Mandelbrot-van Ness representation of $B^H$ is given by 
\begin{equation}
\label{eq:2.1}
B^H_t=C_H\left(\int^0_{-\infty}f_t(s)dB(s)+\int^t_0g_t(s)dB(s)
\right),\quad 0\leq t\leq T,
\end{equation}
where
\begin{eqnarray}
\label{eq:2.2}
f_t(s)&=&(t-s)^{H-1/2}-(-s)^{H-1/2}
\quad{\rm for}\quad s<0\leq t,\\
\label{eq:2.3}
g_t(s)&=&(t-s)^{H-1/2}\quad{\rm for}\quad s<t,
\end{eqnarray}
$B$ is a Brownian motion on the whole real line, and $C_H$ is a positive constant \cite{man68}. Fix $a<0$. After an integration by parts and a change of variable, $B^H$ can be written as
\begin{eqnarray}
B^H_t&=&C_H
\left(
\int^t_0g_t(s)dB_1(s)+\int^0_af_t(s)dB_2(s)+f_t(a)B_2(a)\right.\nonumber\\ 
\label{eq:2.4}
&&\qquad -\int^0_{1/a}\partial_sf_t\left(\frac{1}{v}\right)
\left.\frac{1}{v^3}B_3(v)dv\right),
\end{eqnarray}
where $B_1, B_2$ and  $B_3$ are Brownian motions  given,  respectively, by the restriction of $B$ to $[0,T]$, the restriction of $B$ to $[a,0]$, and
$$B_3(s)=\left\{\begin{array}{lll}
s B(\frac{1}{s})&{\rm if}&s\in[{1}/{a},0),\\
0&{\rm if}& s=0.
\end{array}\right.
$$
To define the approximation $B^n=(B^n_t)_{t\geq 0}$ that appears in (\ref{eq:1.2}), 
the idea is to approximate $B_1,B_2$ and $B_3$ by corresponding transport processes $Z^n_1,Z^n_2$ and $Z^n_3$. 

For $n=1,2,\ldots$ and $ 0<\beta <1/2$, let
\begin{equation}
\label{eq:2.5}
\varepsilon_n=-n^{-\beta/|H-1/2|}.
\end{equation}
For $H>1/2$, define
\begin{eqnarray}
B^n_t&=&C_H\left(\int^t_0g_t(s)dZ^n_1(s)+\int^0_af_t(s)dZ^n_2(s)+f_t(a)Z^n_2(a)\right.
\nonumber\\
\label{eq:2.6}
&&\qquad +\left.\int^0_{1/a}
\biggl(-\int^{s\wedge\varepsilon_n}_{1/a}\partial_sf_t
\biggl(\frac{1}{v}\biggr)\frac{1}{v^3}dv\biggr)
dZ^n_3(s)\right),
\end{eqnarray}
and for $H<1/2$, define
\begin{align}
\label{eq:2.7}
B^n_t=&C_H\left(\int^{(t+\varepsilon_n)\vee 0}_0g_t(s)dZ^n_1(s)+\int^t_{(t+\varepsilon_n)\vee 0}g_t(\varepsilon_n+s)dZ^n_1(s)\right. \notag\\ 
& +\int^{\varepsilon_n}_af_t(s)dZ^n_2(s)+f_t(a)Z^n_2(a)+\int^0_{1/a}
\left(-\int^s_{1/a}\left.\partial_sf_t\left(\frac{1}{v}\right)
\frac{1}{v^3}dv
\right)dZ^n_3(s) \right).
\end{align}

Note that $B^n$ depends on $\beta$ through (\ref{eq:2.5}).

The following consequence of (\ref{eq:1.2}) is obvious since $q$ is arbitrary, hence large enough.
\begin{equation}
\label{eq:2.8}
P\left(\limsup_{ n\to\infty}
\left\{\sup_{0\leq t\leq T}|B^H_t-B^n_t|>Cn^{-
1/2+\beta}(\log n)^{5/2}\right\}\right)=0,
\end{equation}
where $\limsup$ is understood in the sense of sets. This will be used for the proof of Theorem 3.3.

\subsection{Doss-Sussmann representation}

\

Under suitable assumptions on $b$, $\sigma$ and $H$, the Doss-Sussmann representation     of (\ref{eq:1.1}) is given by (see \cite{alo01,dos77,neu07,nou08})
\begin{equation}                            
\label{eq:2.9}
X_t=h(Y_t,B^H_t),
\end{equation}
where the function $h$ and the process $Y$ are  the solutions of equations
\begin{equation}
\label{eq:2.10}
\frac{\partial h}{\partial x_2}(x_1,x_2)=\sigma(h(x_1,x_2)), \quad h(x_1,0)=x_1,\quad x_1,x_2\in\mathbb{R},
\end{equation}
and
\begin{equation}
\label{eq:2.11}
Y'_t={\rm exp}\left(-\int^{B^H_t}_0\sigma'(h(Y_t,s))ds\right)b(h(Y_t,B^H_t)),
\quad Y_0=x_0,
\end{equation}
respectively. The function $h$ has the property (see \cite{dos77}, Lemma 2)
\begin{equation}
\label{eq:2.12}
\frac{\partial h}{\partial x_1}(x_1,x_2)={\rm exp}\left(\int^{x_2}_0
\sigma'(h(x_1,s))ds\right),
\end{equation}
which implies that
\begin{equation}
\label{eq:2.13}
Y'_t=\left(\frac{\partial h}{\partial x_1}(Y_t,B^H_t)\right)^{-1}b(h(Y_t,B^H_t)), \quad Y_0=x_0.
\end{equation}

\section{Results}
Recall that $ b\in C^1_b$, $\sigma \in C^2_b$, (\ref{eq:2.9}) holds, and $B^n$ is defined by (\ref{eq:2.6})-(\ref{eq:2.7}).

The first theorem is a special case with the assumption that the function $\sigma$ is bounded away from $0$. The interest of this result is that  a direct proof can be given using Fernique's theorem \cite{fer70}, and a general Lipschitz dependence result of \cite{rom85}, without involving anything special about $B^n$.

Let $X^n$ be the solution of 
(\ref{eq:1.1}) with $B^H$ replaced by $B^n$, and the integral is defined pathwise.
\begin{theorem}
\label{t:3.1} 
Assume  $\sigma$ is bounded away from $0$. Let $|H-1/2|<\beta< 1/2$.
Then for each $\delta>0$ such that $\beta +\delta<1/2$, there exist $q>0$ and a positive constant $C$ such that
\end{theorem}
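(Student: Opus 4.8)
The plan is to deduce the estimate entirely from the abstract stability result of R\"omisch and Wakolbinger \cite{rom85} applied to the ordinary differential equation that the Doss-Sussmann transformation produces after freezing a sample path of the driving noise, together with the strong approximation \eqref{eq:1.2} for $B^H$. Concretely, on the set where $\sigma$ is bounded away from $0$ one may change variables in \eqref{eq:1.1} via a primitive of $1/\sigma$; writing $F(x)=\int_0^x \sigma(u)^{-1}\,du$, the process $V_t=F(X_t)$ solves a \emph{random} ordinary differential equation $dV_t=\tilde b(V_t)\,dt + dB^H_t$ (pathwise, since the $dB^H$ term has a constant coefficient), where $\tilde b=(b/\sigma)\circ F^{-1}$ is Lipschitz because $b\in C^1_b$, $\sigma\in C^2_b$, and $\sigma$ is bounded below. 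Feeding the noise path $B^n$ instead of $B^H$ into this equation gives exactly $V^n_t=F(X^n_t)$, so it suffices to control $\sup_t|V_t-V^n_t|$ in terms of $\sup_t|B^H_t-B^n_t|$, and then transfer back through the Lipschitz map $F^{-1}$ (Lipschitz since $\sigma$ is bounded above).

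First I would record the precise hypotheses needed to invoke \cite{rom85}: the perturbed equation $dV=\tilde b(V)\,dt+d\eta_t$, viewed as a map from the forcing path $\eta\in C([0,T])$ to the solution path, is Lipschitz continuous in the supremum norm with a constant depending only on the Lipschitz constant of $\tilde b$ and on $T$ (a Gronwall-type bound), so that
\[
\sup_{0\le t\le T}|V_t-V^n_t|\le K_T \sup_{0\le t\le T}|B^H_t-B^n_t|
\]
for a deterministic constant $K_T$. Second, I would combine this with \eqref{eq:1.2}: for the given $\beta$ with $|H-1/2|<\beta<1/2$, choosing $q>0$ large, \eqref{eq:1.2} controls $\sup_t|B^H_t-B^n_t|$ by $Cn^{-1/2+\beta}(\log n)^{5/2}$ off an event of probability $o(n^{-q})$, hence so is $\sup_t|V_t-V^n_t|$. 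Third, I would pull back through $F^{-1}$, whose Lipschitz constant is $\sup_x\sigma(x)<\infty$, to get the same type of estimate for $\sup_t|X_t-X^n_t|$. The role of $\delta$ and Fernique's theorem \cite{fer70} is to absorb the possible unboundedness of the random Lipschitz constants arising if one keeps the general coefficient $\sigma(X)$ rather than the normalized form: the Doss-Sussmann coefficients in \eqref{eq:2.11}--\eqref{eq:2.13} involve $\exp(\pm\int_0^{B^H_t}\sigma'\,ds)$, which are bounded by $\exp(\|\sigma'\|_\infty |B^H_t|)$, and Fernique's theorem gives that $\sup_t|B^H_t|$ has exponential moments of all orders; splitting on whether this supremum exceeds a slowly growing threshold (of order $\log n$) turns the Gaussian tail into the extra $n^{\delta}$ factor and keeps the exceptional probability $o(n^{-q})$.

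Thus the conclusion I would write for Theorem~\ref{t:3.1} is: there exist $q>0$ and $C>0$ such that
\[
P\!\left(\sup_{0\le t\le T}|X_t-X^n_t|>C\,n^{-1/2+\beta+\delta}(\log n)^{5/2}\right)=o(n^{-q})\qquad\text{as }n\to\infty .
\]
The main obstacle is bookkeeping rather than conceptual: one must verify carefully that the change of variables (or equivalently the Doss-Sussmann data $h,Y$) transforms \eqref{eq:1.1} into an ODE whose solution map is genuinely Lipschitz in the forcing with a constant that is either deterministic (when $\sigma$ is bounded away from $0$) or has good enough tails (via Fernique) to be thinned out at the cost of $n^\delta$, and that passing $B^n$ through the same map really reproduces $X^n$ as defined before the theorem (pathwise solution of \eqref{eq:1.1} with $B^H$ replaced by $B^n$). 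Checking that \cite{rom85} applies verbatim in the non-autonomous, random-coefficient setting here — and identifying exactly which norm on the driving path it uses — is where I expect to spend the most care; everything else is Gronwall plus \eqref{eq:1.2}.
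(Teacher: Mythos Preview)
Your argument is correct but proceeds along a different, more elementary route than the paper's. The paper applies Theorems~3A and~4 of \cite{rom85} directly to the Doss--Sussmann system \eqref{eq:2.9}--\eqref{eq:2.13}; those abstract results give a Lipschitz constant for the map $B^H\mapsto X$ that depends on $\|B^H\|_\infty$, and Fernique's theorem is then invoked to control the tail of this random constant, producing the explicit coupling $\delta=q\gamma/\alpha^{1/2}$ between $\delta$ and $q$. This trade-off is the reason the paper warns in Remark~3.2(2) that $q$ cannot be taken arbitrarily large once $\delta$ is fixed.

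Your Lamperti change of variables $V=F(X)$ with $F'=1/\sigma$ bypasses this entirely: since $\sigma$ is bounded away from $0$, $\tilde b=(b/\sigma)\circ F^{-1}$ is globally Lipschitz with a \emph{deterministic} constant, and the additive equation $V_t=F(x_0)+\int_0^t\tilde b(V_s)\,ds+B^H_t$ yields, by a plain Gronwall, $\|V-V^n\|_\infty\le e^{LT}\|B^H-B^n\|_\infty$; pulling back through the Lipschitz map $F^{-1}$ gives the same for $\|X-X^n\|_\infty$. Consequently your argument actually delivers \eqref{eq:3.1} with $\delta=0$ and \emph{arbitrary} $q>0$, which is strictly sharper than the paper's statement, and your paragraph about Fernique and the $n^\delta$ factor is superfluous in your own proof (it describes the paper's route, not yours). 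The one verification you correctly flag---that $F(X^n)$ solves the same additive equation with $B^n$ in place of $B^H$---holds because $B^n$ is piecewise $C^1$, so $X^n$ is a classical ODE solution and the ordinary chain rule applies.
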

\begin{equation}
\label{eq:3.1}
P\left(\sup_{0\leq t\leq T}|X_t-X^n_t|>Cn^{-1/2+\beta+\delta}(\log n)^{5/2}\right)=o(n^{-q})\quad{\it as}\quad n\to \infty.
\end{equation}

\begin{remark}
\begin{enumerate}
	\item A transport approximation for equation (\ref{eq:1.1}) with Brownian motion $(H=1/2)$ and $\sigma$ bounded away from $0$ was studied in \cite{gor80,rom85}.
The result is like (\ref{eq:3.1}) with $\beta=\delta=0$. There was an error in the proof in \cite{gor80}, which was remedied with the method of \cite{rom85}. The paper \cite{cso88} gives a related result with a different type of formulation.
\item The approximation (\ref{eq:1.2}) holds for arbitrary $q$, but in the
approximation (\ref{eq:3.1}) $\delta$ is arbitrary and $q$ is chosen appropriately small (see the proof). Hence a result like (\ref{eq:3.2}) below may not hold under the assumption of Theorem 3.1.
\end{enumerate}
  \end{remark}

For the general result, let $X^n=h^n(Y^{n,n^2},B^n)$, where $(h^n)_n$  is an Euler scheme approximation of equation (\ref{eq:2.10}), and $(Y^{n,m})_m$ is an Euler scheme approximation of $Y^n$, which is the (Doss-Sussmann) solution of equation 
(\ref{eq:2.11}) with $B^H$ replaced by $B^n$ (the precise definitions of $h^n$ and $Y^{n,m}$ are given in (\ref{eq:4.3}) and
(\ref{eq:4.9})).

\begin{theorem}
\label{t:3.3}
Let $|H-1/2|<\beta < 1/2$.  Then for each $\delta>0$ such that $\delta <\beta$ and $\beta +\delta <1/2$, there exists a positive constant $C$ such that
\end{theorem}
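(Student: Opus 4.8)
The plan is to prove the estimate in the statement, which we read---in analogy with (\ref{eq:2.8})---as
$$
P\!\left(\limsup_{n\to\infty}\Big\{\sup_{0\le t\le T}|X_t-X^n_t|>Cn^{-1/2+\beta+\delta}(\log n)^{5/2}\Big\}\right)=0 ,
$$
i.e.\ that a.s.\ the displayed bound holds for all large $n$. The essential device is to carry out all the estimates on one event $\Omega_0$ of probability one, namely the intersection of the event in (\ref{eq:2.8}) with $\{R<\infty\}$, where $R=R(\omega):=\sup_{n}\sup_{0\le t\le T}|B^n_t|$; that $R<\infty$ a.s.\ follows because by (\ref{eq:2.8}) one has $\sup_t|B^n_t|\le\sup_t|B^H_t|+o(1)$ for large $n$, and each individual $\sup_t|B^n_t|$ is a.s.\ finite. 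On $\Omega_0$ the arguments of $h$, of $h^n$, and of the coefficient of (\ref{eq:2.11}) stay in a fixed bounded region depending only on $R$, so every nonlinear function occurring below---$h,h^n$ and their $x_1$-derivatives, $b,b',\sigma',\sigma''$, and the coefficient $\big(\tfrac{\partial h}{\partial x_1}(y,z)\big)^{-1}b(h(y,z))$ of (\ref{eq:2.13})---is Lipschitz with constant a finite function of $R$. Because these functions of $R$ involve iterated exponentials (through (\ref{eq:2.12}) and Gronwall), tail bounds of order $o(n^{-q})$ are hopeless; instead one uses that an a.s.\ finite random constant is $\le n^{\delta}$ for all large $n$, which is the role of $\delta$.

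Next I would decompose
$$
X_t-X^n_t=\big[h(Y_t,B^H_t)-h(Y^n_t,B^n_t)\big]+\big[h(Y^n_t,B^n_t)-h^n(Y^{n,n^2}_t,B^n_t)\big]=:I_t+II_t .
$$
For $I_t$, writing (\ref{eq:2.13}) for $Y$ (driven by $B^H$) and for $Y^n$ (driven by $B^n$), subtracting, using the uniform Lipschitz property of the coefficient in both variables on the region in force, and applying Gronwall gives $\sup_t|Y_t-Y^n_t|\le C(R)\sup_t|B^H_t-B^n_t|$; then the Lipschitz bound for $h$ yields, on $\Omega_0$ and for large $n$, $\sup_t|I_t|\le C(R)\sup_t|B^H_t-B^n_t|\le C(R)\,C\,n^{-1/2+\beta}(\log n)^{5/2}$ by (\ref{eq:2.8}).

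The term $II_t$ is the bulk of the argument. Split it as $[h(Y^n_t,B^n_t)-h^n(Y^n_t,B^n_t)]+[h^n(Y^n_t,B^n_t)-h^n(Y^{n,n^2}_t,B^n_t)]$. The first summand is the global error of the Euler scheme $h^n$ for the ODE (\ref{eq:2.10}) on $[-R,R]$, hence at most $C(R)$ times the mesh of $h^n$, controlled by choosing that mesh (e.g.\ $\le n^{-1}$). The second is $\le\mathrm{Lip}_R(h^n)\,\sup_t|Y^n_t-Y^{n,n^2}_t|$, and $\sup_t|Y^n_t-Y^{n,n^2}_t|$ is the global error of the $n^2$-step Euler scheme for the ODE (\ref{eq:2.11}) driven by $B^n$: by the standard deterministic Euler estimate it is at most $C(R)$ times the modulus of continuity of $t\mapsto B^n_t$ at scale $n^{-2}$. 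Here one must read off from (\ref{eq:2.6})--(\ref{eq:2.7}) that, since the singular kernels are regularized by the shift $\varepsilon_n$, the relevant integrands have size at most $C|\varepsilon_n|^{H-1/2}$ (which is $n^{\beta}$ for $H<1/2$ and $n^{-\beta}$ for $H>1/2$) while the transport processes move at speed $n$, so that $|B^n_{t'}-B^n_t|\le C(R)\,|\varepsilon_n|^{H-1/2}\,n\,|t'-t|$ on $\Omega_0$ up to at most a power of $\log n$, whence $\sup_t|Y^n_t-Y^{n,n^2}_t|\le C(R)\,n^{\beta-1}(\log n)^{O(1)}$. Collecting the pieces, $\sup_t|X_t-X^n_t|\le C(R)\big(n^{-1/2+\beta}+n^{\beta-1}\big)(\log n)^{O(1)}=C(R)\,n^{-1/2+\beta}(\log n)^{O(1)}$ on $\Omega_0$ for large $n$; absorbing the a.s.\ finite constant $C(R)$ and the surplus logarithmic factors into $n^{\delta}$ (legitimate for any $\delta>0$, keeping $(\log n)^{5/2}$ to match (\ref{eq:1.2})) and using $P(\Omega_0)=1$ yields the claim.

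The step I expect to be the main obstacle is the last displayed estimate for $\sup_t|Y^n_t-Y^{n,n^2}_t|$, together with its companion for $h^n$: one must prove a sharp, explicitly $n$-dependent modulus-of-continuity (or bounded-variation) bound for $B^n$ directly from the representations (\ref{eq:2.6})--(\ref{eq:2.7})---tracking carefully how the $\varepsilon_n$-regularization interacts with the speed-$n$ transport processes, and isolating the logarithmic factors so that the final polylog exponent stays at $5/2$---and then propagate it, through the deterministic Euler estimate and Gronwall, across the two nested Euler schemes and the composition $h^n(Y^{n,n^2}_{\cdot},B^n_{\cdot})$ defining $X^n$. The conditions $\delta<\beta$ and $\beta+\delta<1/2$ are the smallness requirements under which this bookkeeping of exponents closes, the latter being what forces the rate to $0$.
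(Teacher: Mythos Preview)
Your plan is correct and matches the paper's proof almost step for step: the paper uses the same three-way splitting via the intermediates $h(Y^n,B^n)$ and $h(Y^{n,n^2},B^n)$ (you insert $h^n(Y^n,B^n)$ in place of the latter, an equivalent reordering that requires a Lipschitz bound for $h^n$ rather than for $h$), establishes the modulus-of-continuity bound for $B^n$ directly from (\ref{eq:2.6})--(\ref{eq:2.7}) as Lemma~\ref{l:4.2}, and then controls the two Euler errors (Lemma~\ref{l:4.7}, Lemma~\ref{l:4.12}) and absorbs the $\omega$-dependent constants into $n^\delta$ exactly as you outline. One correction to your heuristic at the point you flag as the main obstacle: the Lipschitz constant of $B^n$ turns out to be $Kn^{1+\beta}$ for \emph{both} $H<1/2$ and $H>1/2$, with $K$ deterministic and no logarithmic factors---for $H>1/2$ the extra $n^\beta$ does not come from the size of the kernel $g_t$ (which is bounded) but from the $\varepsilon_n$-truncation in the last integral of (\ref{eq:2.6}); this does not affect your stated conclusion $\sup_t|Y^n_t-Y^{n,n^2}_t|\le C(R)\,n^{\beta-1}$, since $n^{1+\beta}\cdot n^{-2}=n^{\beta-1}$.
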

\begin{equation}
\label{eq:3.2}
P\left(\limsup_{n\to\infty}\left\{\sup_{0\leq t\leq T}|X_t-X^n_t|>C
n^{-1/2+\beta+\delta}(\log n)^{5/2}\right\}\right)=0.
\end{equation}

Note that without the assumption on $\sigma$ in Theorem 3.1 the same rate of convergence holds, but the probabilistic part of the result is slightly different.

\section{Proofs}

We write $||\cdot||_\infty$ for the sup norm on [0,T].

\subsection{Proof of Theorem \ref{t:3.1}}
\

We give an idea of the proof.
By Fernique's theorem \cite{fer70} there exists $\alpha>0$ such that
$$P(||B^H||_\infty\geq s)\leq Ke^{-\alpha s^2},\quad s\geq 0,$$
where $K=E{\rm exp}(\alpha||B^H||^2_\infty)<\infty$. Then, applying Theorem 3 A) and Theorem 4 of \cite{rom85}, from (\ref{eq:1.2}) we obtain
$$P(||X-X^n||_\infty\geq n^\delta C 
n^{-{1}/{2}+\beta}(\log n)^{5/2})=o(n^{-q})\quad{\rm as}\quad n\to\infty,$$
with $\delta=q\gamma/\alpha^{1/2}$, where $\gamma$ is the positive constant in Theorem 4 of \cite{rom85}. Since $q$ in (\ref{eq:1.2}) is arbitrary, it can be chosen so that $\delta$ is as small as desired, and (\ref{eq:3.1}) is obtained. $\qed$

\begin{remark}

It can be shown that for $\sigma(x)=$ arctan $x$, formula (40) of \cite{rom85} cannot be proved because
$$\left(\frac{\partial h}{\partial x}(x,y)\right)^{-1}\geq {\rm exp}
\left(\frac{|y|}{1+x^2}\right)$$
for $x<0,y<0,$ so $(\frac{\partial h}{\partial x}(x,y))^{-1}$ is unbounded, and therefore the argument for the proof of Theorem 4 in \cite{rom85} is not valid.
\end{remark}

\subsection{Proof of Theorem \ref{t:3.3}}
\

Since the constant $C$ in (\ref{eq:2.8}) does not play an essential role, for simplicity we put $C=1$, and we will also prove the theorem with $C=1$ in 
(\ref{eq:3.2}).
  
We start by describing the parts of the approximation. We first approximate the function $h$ and the process $Y$ given by (\ref{eq:2.10}) and (\ref{eq:2.11}) respectively, and then, based on those approximations, we formulate the approximation for the solution of (\ref{eq:1.1}).

The function $h: \mathbb{R}^2 \to \mathbb{R}$ satisfies
\begin{equation}
\label{eq:4.1}
	h(x,y)=x+\int_0^y \sigma(h(x,s))ds.
\end{equation}
For each $n=1,2,\cdots$, we take the partition $\{y_i^n\}$ of the interval $[-n, n]$ given by $-n=y_{-n^2}^n < \cdots < y_{-1}^n < y_0^n = 0 < y_{1}^n< \cdots < y_{n^2}^n=n$, where for $r_n={1}/{n}$, and  $i=1, \cdots, n^2-1$,
\begin{equation}
\label{eq:4.2}
	y_{i+1}^n=y_i^n+r_n=\frac{i+1}{n},\quad
		y_{-(i+1)}^n=y_{-i}^n-r_n=-\frac{i+1}{n}.
\end{equation}
We define the functions $h^n: \mathbb{R}^2 \to \mathbb{R}$ by 
$$h^n(x,y)=0 \quad{\rm if}\quad (x,y)\notin [-n,n]\times[-n,n],$$
for $(x,y)\in [-n,n]\times[-n,n]$ and $k=0, 1, \cdots, n^2-1$, 
\begin{eqnarray*}
h^n(x,y^n_0)&=&x,\\
h^n(x,y^n_{k+1})&=& h^n(x, y^n_{k})+r_n\sigma(h^n(x, y^n_{k})),\\
	h^n(x,y^n_{-(k+1)})&=& h^n(x, y^n_{-k})-r_n\sigma(h^n(x, y^n_{-k})),
\end{eqnarray*}
and by linear interpolation,
\begin{align}
\label{eq:4.3}
	h^n(x,y)&= h^n(x, y^n_{k})+(y-y^n_{k})\sigma(h^n(x, y^n_{k}))\quad \ \ \ \ \ \  \text{if} \ \ \  y^n_{k}\leq y < y^n_{k+1},\notag \\
	h^n(x,y)&= h^n(x, y^n_{-k})- (y^n_{-k}-y)\sigma(h^n(x, y^n_{-k}))\ \ \ \  \text{if} \ \ \  y^n_{-(k+1)}<  y \leq y^n_{-k}.
\end{align}

From (\ref{eq:2.13}) we have 
\begin{equation}
\label{eq:4.4}
	Y_t= x_0 + \int_0^t\left(\frac{\partial{h}}{\partial x_1}(Y_s, B^H_s)\right)^{-1}b(h(Y_s, B^H_s))ds.
\end{equation}
For each $n=1,2, \cdots$, we define the process $Y^n$ as the solution of
\begin{equation}
	\label{eq:4.5}
	Y^n_t= x_0 + \int_0^t\left(\frac{\partial{h}}{\partial x_1}(Y^n_s, 
B^n_s)\right)^{-1}b(h(Y^n_s, B^n_s))ds, \ \ \ t\in[0, T].
\end{equation}
These processes exist and are unique because for each $n=1,2, \cdots$, the function 
$$(x, t)\mapsto \left(\frac{\partial{h}}{\partial x_1}(x, B^n_t)\right)^{-1}b(h(x, 
B^n_t)), \ \ t\in [0, T],$$
 satisfies  Lipschitz and linear growth conditions in $x$ (see (\ref{eq:2.12}), (\ref{eq:4.7}) and Corollary \ref{eq:4.5} below). Moreover, they are defined on the same probability space as the Brownian motion $B$ in (\ref{eq:2.1}), since the solution of (\ref{eq:4.5}) is given for each sample point, and $B^n$ is defined on the same probability space as $B$.

We give next  an Euler scheme for approximating the processes $Y^n$ for each $n=1,2, \cdots$.

By  (\ref{eq:2.12}),   equation (\ref{eq:4.5}) can be written as
\begin{equation}
\label{eq:4.6}
(Y_t^n)'=f(Y^n_t, B^n_t),\quad  Y_{0}^n=x_0, 	
\end{equation}
where $f(x,y)$ is defined by
\begin{equation}
	\label{eq:4.7}
	f(x,y)= \exp\left(-\int_0^y \sigma'(h(x,u))du\right)b(h(x,y)).
\end{equation}
  For each $n=1,2, \cdots$, we define $f^n(x,y)$ by
  \begin{equation}
	\label{eq:4.8}
	f^n(x,y)= \exp\left(-\int_0^y \sigma'(h^n(x,u))du\right)b(h^n(x,y)),
\end{equation}
with $h^n$  given by (\ref{eq:4.3}).

The Euler scheme $({Y}^{n,m})_m$ for equation 
(\ref{eq:4.6}) is defined as follows for each $m=1,2, \cdots$, the partition $0=t_0<\cdots <t_m=T$ of $[0, T]$ with $t_{i+1}=t_i+r_m$, and $r_m={T}/{m}$:
\begin{eqnarray}
{Y}^{n,m}_0&=&x_0, \nonumber\\
{Y}^{n,m}_{t_{k+1}}&=&{Y}^{n,m}_{t_{k}}+r_mf^n({Y}^{n,m}_{t_{k}}, B^n_{t_{k}}),
\quad k=0, \cdots, (m-1), \nonumber \\
{Y}^{n,m}_{t}&=&{Y}^{n,m}_{t_{k}}+(t-t_k)f^n({Y}^{n,m}_{t_{k}}, B^n_{t_{k}})
\nonumber \\
\label{eq:4.9}
&=&{Y}^{n,m}_{t_{k}}+ \int_{t_k}^t f^n({Y}^{n,m}_{t_{k}}, 
B^n_{t_{k}})ds, \quad{\rm if}\quad t_k\leq t < t_{k+1}.
\end{eqnarray}

Equation (\ref{eq:1.1})  has  a unique solution $X$ with  representation 
(\ref{eq:2.9}), and we define similarly  the approximation $X^n$  by means of the Euler schemes for $h$ and $Y^n$ as
\begin{equation}
\label{eq:4.10}
X^n_t=h^n({Y}^{n, n^2}_t, B^n_t),
\end{equation}
where $h^n$ and ${Y}^{n, n^2}$ are given by (\ref{eq:4.3}) and 
(\ref{eq:4.9}), respectively.

We then have
\begin{equation}
\label{eq:4.11}
|X_t-X^n_t|\leq H_1(t)+H_2(t)+H_3(t),
\end{equation}
where
\begin{eqnarray}
\label{eq:4.12}
H_1(t)&=&|h(Y_t,B^H_t)-h(Y^n_t,B^n_t)|,\\
\label{eq:4.13}
H_2(t)&=&|h(Y^n_t,B^n_t)-h(Y^{n,n^2}_t,B^n_t)|,\\
\label{eq:4.14}
H_3(t)&=&|h(Y^{n,n^2}_t,B^n_t)-h^n(Y^{n,n^2}_t,B^n_t)|.
\end{eqnarray}
The proof consists in obtaining estimates involving $H_1,H_2$ and $H_3$.

We will need the following 
 preliminary results. First a Lipschitz property of $B^n$.

\begin{lemma}
\label{l:4.2}
Let $B^n$ be defined by (\ref{eq:2.6})-(\ref{eq:2.7}). Then for all $n$ and for  $t_1, t_2\in[0, T]$,
$$	\left|B^n_{t_2}-B^n_{t_1}\right|\leq Kn^{1+\beta}\left|t_2-t_1\right|,
$$
where $K$ is a positive constant.
\end{lemma}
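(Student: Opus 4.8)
The plan is to estimate the increment $B^n_{t_2}-B^n_{t_1}$ term by term using the explicit representations \eqref{eq:2.6} (for $H>1/2$) and \eqref{eq:2.7} (for $H<1/2$), exploiting the fact that each transport process $Z^n_j$ has derivative bounded by $n$ in absolute value, hence is $n$-Lipschitz, and that the integrands are smooth kernels whose $t$-increments can be controlled by elementary calculus on $f_t$ and $g_t$. The factor $n^{\beta}$ will come from the cutoff level $\varepsilon_n=-n^{-\beta/|H-1/2|}$ in \eqref{eq:2.5}, which is exactly where the kernels $g_t,f_t$ are truncated so as to avoid the singularity of the Mandelbrot--van Ness representation; the power of $n$ picked up when differentiating $(t-s)^{H-1/2}$ near the truncation boundary is $|\varepsilon_n|^{H-1/2-1}=n^{\beta(1-(H-1/2))/|H-1/2|}$ or similar, and after bookkeeping this collapses to $n^{1+\beta}$.

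Concretely, I would write $B^n_t=C_H\sum_j I_j(t)$ where the $I_j$ are the summands in \eqref{eq:2.6}, resp. \eqref{eq:2.7}. For a term of the form $\int_0^{\cdot} g_t(s)\,dZ^n_1(s)$, I would first integrate by parts to move the $dZ^n_1$ onto $Z^n_1$ (which is continuous, piecewise linear, bounded on $[0,T]$ by $nT$, and $n$-Lipschitz), obtaining an ordinary Riemann integral against $\partial_s g_t(s)$ plus boundary terms; then I estimate $|g_{t_2}(s)-g_{t_1}(s)|$ and $|\partial_s g_{t_2}(s)-\partial_s g_{t_1}(s)|$ by the mean value theorem, the worst contribution occurring where $s$ is within $|\varepsilon_n|$ of $t$, which contributes the factor $n^\beta$, while the $n$-Lipschitz bound on $Z^n_1$ (together with $\|Z^n_1\|_\infty\le nT$) contributes the remaining factor $n$. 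The terms involving $f_t$ on $[a,\varepsilon_n]$ (or $[a,0]$) and the $B_3$/$Z^n_3$ term on $[1/a,0]$ are handled identically: on these fixed intervals $f_t$ and its inner antiderivative are smooth in $t$ uniformly in $s$ (the singularity at $s=0$ having been removed by the truncation at $\varepsilon_n$ or by the change of variables), so their $t$-increments are $O(|t_2-t_1|)$ with constants depending only on $a,H,T$, and again each $dZ^n_j$ costs a factor $n$. Summing the finitely many contributions and absorbing all $a,H,T$-dependent constants into $K$ gives $|B^n_{t_2}-B^n_{t_1}|\le Kn^{1+\beta}|t_2-t_1|$, uniformly in $n$.

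The main obstacle is the careful analysis near the truncation point: one must check that after the integration by parts the boundary terms at $s=\varepsilon_n$ (in \eqref{eq:2.7}) or at the matching point $s=(t+\varepsilon_n)\vee 0$ between the two $Z^n_1$ integrals in \eqref{eq:2.7} combine so that no worse-than-$n^\beta$ power of $n$ survives, and that the identity $g_t(\varepsilon_n+s)$ used on $[(t+\varepsilon_n)\vee 0,t]$ is differentiated in $t$ correctly. In the $H>1/2$ case the kernel $\partial_s f_t(1/v)$ appearing in \eqref{eq:2.6} is bounded on the compact $v$-range with the cutoff $s\wedge\varepsilon_n$, so there the estimate is more routine. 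I expect the bulk of the work to be the elementary but slightly delicate estimate
\[
\sup_{0\le s\le T}\big|\partial_t\big((t-s)^{H-1/2}\mathbf{1}_{\{s\le t-|\varepsilon_n|\}}\big)\big|\le K n^{\beta}
\]
and its analogue for the second derivative in $s$, after which everything else is bookkeeping.
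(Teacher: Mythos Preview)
Your term-by-term plan and your identification of the source of the factors $n$ (from the $\pm n$ velocity of the transport processes) and $n^{\beta}$ (from the cutoff $\varepsilon_n$) are in line with the paper's proof. However, the specific execution you describe contains a wrong turn and one incorrect estimate.

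The integration by parts is unnecessary and harmful. Since each $Z^n_j$ has piecewise derivative $\pm n$, one has the direct bound $\bigl|\int \phi\,dZ^n_j\bigr|\le n\int|\phi|$, and this is exactly what the paper uses. If instead you integrate by parts, you replace $g_t(s)=(t-s)^{H-1/2}$ by $\partial_s g_t(s)=-(H-\tfrac12)(t-s)^{H-3/2}$, which has a strictly worse singularity at $s=t$. For $H>1/2$, note that the first integral in (2.6) carries \emph{no} $\varepsilon_n$-truncation at all, so your remark that ``the worst contribution occurs where $s$ is within $|\varepsilon_n|$ of $t$'' does not apply there. After IBP the moving-endpoint piece $\int_{t_1}^{t_2}\partial_s g_{t_2}(s)Z^n_1(s)\,ds$ is only of order $n\,(t_2-t_1)^{H-1/2}$, i.e.\ H\"older and not Lipschitz, and the ``fixed-domain'' piece $\int_0^{t_1}[\partial_s g_{t_2}-\partial_s g_{t_1}]Z^n_1\,ds$ leads, via the mean value theorem, to the non-integrable kernel $(t_1-s)^{H-5/2}$. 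So the plan ``IBP, then MVT on $\partial_s g_t$'' does not close as written.

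Relatedly, your ``key estimate'' $\sup_s\bigl|\partial_t\bigl((t-s)^{H-1/2}\mathbf 1_{\{s\le t-|\varepsilon_n|\}}\bigr)\bigr|\le Kn^{\beta}$ is false: that supremum equals $|H-\tfrac12|\,|\varepsilon_n|^{H-3/2}=|H-\tfrac12|\,n^{\beta(3/2-H)/|H-1/2|}$, which for e.g.\ $H=1/4$ is $n^{5\beta}$, not $n^{\beta}$. What is true, and what the paper exploits, is the \emph{integrated} identity $|\varepsilon_n|^{H-1/2}=n^{\beta}$.

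The paper's argument is simply: bound $|\int\phi\,dZ^n_j|\le n\int|\phi|$, compute the resulting deterministic integrals \emph{exactly} (they all reduce to expressions like $t_2^{H+1/2}-t_1^{H+1/2}$ or factors $|\varepsilon_n|^{H-1/2}$), and only then apply the mean value theorem to the smooth function $t\mapsto t^{H+1/2}$ or its analogues. With this order of operations every term is Lipschitz in $t$ with constant at most $Cn^{1+\beta}$, no IBP needed.
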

\begin{proof} First we take $H> 1/2$. It suffices to check the property for each one of the four functions on the r.h.s. of (\ref{eq:2.6}), using (\ref{eq:2.2}), 
(\ref{eq:2.3}) and the  transport processes. We will omit some of the calculations.

Note that $\partial_sf_t>0$ by a straighforward calculation (see 
\cite{gar09}, Lemma 3.1).

For $t_1<t_2$, by the mean value theorem,
\begin{align*}
&\left|\int_0^{t_2} g_{t_2}(s)dZ_1^{n}(s)-\int_0^{t_1} g_{t_1}(s)dZ_1^{n}(s)\right| \\
\leq &n\left(\left|\int_0^{t_1}\left[ (t_2-s)^{H-1/2}-(t_1-s)^{H-1/2}\right]ds\right|+\left|\int_{t_1}^{t_2} (t_2-s)^{H-1/2}ds\right|\right)\nonumber \\
=&\frac{n}{H+1/2}\left(t_2^{H+1/2} - t_1^{H+1/2}\right)\leq nT^{H-1/2}\left|t_2 - t_1\right|. 
\end{align*}
Again by the mean value theorem, 
\begin{align*}
&\left|\int_a^{0} f_{t_2}(s)dZ_2^{n}(s)-\int_a^{0} f_{t_1}(s)dZ_2^{n}
(s)\right| \\
\leq& n\int_a^{0}| (t_2-s)^{H-1/2}-(t_1-s)^{H-1/2}|ds \\	
=&\frac{n}{H+1/2}\bigl( [(t_2-a)^{H+1/2}-(t_1-a)^{H+1/2}]-[t_2^{H+1/2} - t_1^{H+1/2}] 	\bigr)r\\
\leq&\frac{n}{H+1/2}((t_2-a)^{H+1/2}-(t_1-a)^{H+1/2} )\leq n(T-a)^{H-1/2}\left|t_2 - t_1\right|. 
\end{align*}
Next, 
\begin{align*}
&|f_{t_2}(a)Z_2^{n}(a)-f_{t_1}(a)Z_2^{n}(a)|\leq n(-a)|(t_2-a)^{H-1/2}-(t_1-a)^{H-1/2}|\\
\leq& n(-a)^{H-1/2}(H-1/2)\left|t_2-t_1\right|.
\end{align*}
Finally, 
\begin{eqnarray*}
\lefteqn{\left|\int_{1/a}^{0}\biggl(-\int_{1/a}^{s\wedge {\varepsilon}_n}\partial_sf_{t_2}\biggl(\frac{1}{v}\biggr)\frac{1}{v^3}dv\biggr)dZ_3^{n}(s)\right.\left.-\int_{1/a}^{0}\biggl(-\int_{1/a}^{s\wedge {\varepsilon}_n}\partial_sf_{t_1}\biggl(\frac{1}{v}\biggr)\frac{1}{v^3}dv\biggr)dZ_3^{n}(s)
\right|\nonumber}\\
&\leq& n(H-1/2)\int_{1/a}^{0}\int_{1/a}^{s\wedge {\varepsilon}_n}|-(t_{2}-1/v)^{H-3/2}+(t_{1}-1/v)^{H-3/2}|(-1/v)^3dvds\nonumber\\
&\leq& n(H-1/2)(3/2-H)\left|t_2-t_1\right|\int_{1/a}^{0}\int_{1/a}^{s\wedge {\varepsilon}_n}(-v)^{-H-1/2}dvds,
\end{eqnarray*}
and, by (\ref{eq:2.5}),
\begin{align*}
\int_{1/a}^{0}\int_{1/a}^{s\wedge {\varepsilon}_n}(-v)^{-H-1/2}dvds\leq & \frac{1}{H-1/2}\int_{1/a}^{0}(-\varepsilon_n)^{-H+1/2}dv \notag \\
=&\frac{1}{H-1/2}n^{\beta}(-1/a),
\end{align*}
hence the result for $H>1/2$ follows.

We proceed similarly for $H< 1/2$. 
Let
$$
A_1=\left|\int_0^{(t_2+\varepsilon_n)\vee 0} g_{t_2}(s)dZ_1^{n}(s)-\int_0^{(t_1+\varepsilon_n)\vee 0} g_{t_1}(s)dZ_1^{n}(s)\right|.
$$
If $t_1+\varepsilon_n<0$ and $t_2+\varepsilon_n<0$, then $A_1=0$.
  If $t_1+\varepsilon_n<0$ and $t_2+\varepsilon_n>0$, then by the mean value theorem,
\begin{align*}
A_1&=\biggl|\displaystyle\int_0^{t_2+\varepsilon_n} (t_2-s)^{H-1/2}dZ_1^{n}(s)\biggr|\\
&\leq n\displaystyle\int_0^{t_2+\varepsilon_n} (t_2-s)^{H-1/2}ds\leq n(-\varepsilon_n)^{H-1/2}(t_2+\varepsilon_n)\leq n^{1+\beta}\left|t_2-t_1\right|,
\end{align*}
and if $t_1+\varepsilon_n>0$ and $t_2+\varepsilon_n>0$, again by the mean value theorem,
\begin{eqnarray*}
A_1&\leq& n\left[\int_0^{t_1+\varepsilon_n} [(t_1-s)^{H-1/2}-(t_2-s)^{H-1/2}]ds +\int_{t_1+\varepsilon_n}^{t_2+\varepsilon_n} (t_2-s)^{H-1/2}ds\right]\nonumber \\
&\leq& 2n(-\varepsilon_n)^{H-1/2}\left|t_2-t_1\right|=2n^{1+\beta}\left|t_2-t_1\right|.\nonumber
\end{eqnarray*}

Let
$$
A_2=\left|\int_{(t_2+\varepsilon_n)\vee 0}^{t_2} g_{t_2}(\varepsilon_n+s)dZ_1^{n}(s)-\int_{(t_1+\varepsilon_n)\vee 0}^{t_1} g_{t_1}(\varepsilon_n+s)dZ_1^{n}(s)\right|
$$
We show only one case. The others are similar.
If $t_1+\varepsilon_n<0$, $t_2+\varepsilon_n>0$ and $t_1\geq t_2+\varepsilon_n$,
\begin{eqnarray*}
A_2&=&\left|\int_{t_1}^{t_2}(t_2-\varepsilon_n-s)^{H-1/2}dZ_1^{n}(s)-\int_{0}^{t_2+\varepsilon_n} (t_1-\varepsilon_n-s)^{H-1/2}dZ_1^{n}(s)\right.\nonumber \\
&&\left. \hspace{0.5cm}+ \int_{t_2+\varepsilon_n}^{t_1} [(t_2-\varepsilon_n-s)^{H-1/2}-(t_1-\varepsilon_n-s)^{H-1/2}]dZ_1^{n}(s)\right|\nonumber \\
&\leq&\frac{2n}{H+1/2}[(t_2-\varepsilon_n-t_1)^{H+1/2}-(-\varepsilon_n)^{H+1/2}]\nonumber\\
 &\leq& 2n(-\varepsilon_n)^{H-1/2}\left|t_2-t_1\right|= 2n^{1+\beta}\left|t_2-t_1\right|.\nonumber 
\end{eqnarray*}

Next,
\begin{align*}
&\left|\int_a^{\varepsilon_n} f_{t_2}(s)dZ_2^{n}(s)-\int_a^{\varepsilon_n} f_{t_1}(s)dZ_2^{n}(s)\right|  \\
\leq&\frac{n}{H+1/2}\left[(t_2-\varepsilon_n)^{H+1/2}-(t_1-\varepsilon_n)^{H+1/2}\right]\nonumber \\
\leq& n(t_1-\varepsilon_n)^{H-1/2}\left|t_2-t_1\right|\leq n(-\varepsilon_n)^{H-1/2}\left|t_2-t_1\right|=n^{1+\beta}\left|t_2-t_1\right|,
\end{align*}
\begin{eqnarray*}
\left| f_{t_2}(a)Z_2^{n}(a)- f_{t_1}(a)Z_2^{n}(a)\right| &\leq& n (-a)(1/2-H)(t_1-a)^{H-3/2}\left|t_2-t_1\right|\nonumber \\
&\leq& n^{1+\beta} (-a)^{H-1/2}(1/2-H)\left|t_2-t_1\right|,\nonumber 
\end{eqnarray*}
\begin{align*}
&\left|\int_{1/a}^{0}-\biggl(\int_{1/a}^{s}\partial_sf_{t_1}\biggl(\frac{1}{v}\biggr)\frac{1}{v^3}dv\biggr)dZ_3^{n}(s) \right. \left.-\int_{1/a}^{0}\biggl(-\int_{1/a}^{s}\partial_sf_{t_2}\biggl(\frac{1}{v}\biggr)\frac{1}{v^3}dv\biggr)dZ_3^{n}(s) \right| \nonumber \\
\leq& n\int_{1/a}^{0}\int_{1/a}^{s}(1/2-H)\left[(t_{1}-1/v)^{H-3/2}-(t_{2}-1/v)^{H-3/2}\right](-1/v)^3dvds\nonumber\\ 
\leq& n\int_{1/a}^{0}\int_{1/a}^{s}(1/2-H)(3/2-H)(t_{1}-1/v)^{H-5/2}\left|t_2-t_1\right|(-1/v)^3dvds\nonumber\\ 
\leq& (3/2-H)(-1/a)^{3/2-H}n^{1+\beta}\left|t_2-t_1\right|. 
\end{align*}
\end{proof}
\begin{corollary}
\label{c:4.3}
For each $n=1,2, \cdots,$ $\left\|B^n\right\|_{\infty}<\infty$ a.s.
\end{corollary}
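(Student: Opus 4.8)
The plan is to read this off immediately from Lemma \ref{l:4.2}. The only thing that needs to be checked separately is that $B^n$ is finite at a single point, and the natural choice is $t=0$: substituting $t=0$ into the definition (\ref{eq:2.6}) (for $H>1/2$) or (\ref{eq:2.7}) (for $H<1/2$), every kernel vanishes. Indeed $g_0$ is integrated over an empty interval, $f_0(s)=(-s)^{H-1/2}-(-s)^{H-1/2}=0$, $f_0(a)=0$, and $\partial_sf_t(s)=(H-1/2)\bigl[(-s)^{H-3/2}-(t-s)^{H-3/2}\bigr]$ vanishes at $t=0$, so the last term also vanishes. Hence $B^n_0=0$ for every $n$ and every sample point.

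Then I would apply Lemma \ref{l:4.2} with $t_1=0$, $t_2=t$, obtaining
$$|B^n_t|=|B^n_t-B^n_0|\leq Kn^{1+\beta}t\leq Kn^{1+\beta}T$$
for all $t\in[0,T]$. Taking the supremum over $t\in[0,T]$ gives $\|B^n\|_\infty\leq Kn^{1+\beta}T<\infty$ for each $n$. Since Lemma \ref{l:4.2} holds for every sample point for which the transport processes $Z^n_1,Z^n_2,Z^n_3$, and hence the pathwise integrals in (\ref{eq:2.6})--(\ref{eq:2.7}), are well defined — which occurs almost surely (in fact the transport paths move at speed $n$, so they are bounded and the integrals are ordinary Riemann--Stieltjes integrals) — the asserted bound, and in particular $\|B^n\|_\infty<\infty$, holds a.s.

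There is essentially no obstacle here: the corollary is just the observation that a pathwise Lipschitz function on the compact interval $[0,T]$ is bounded as soon as it is finite somewhere, and Lemma \ref{l:4.2} supplies the Lipschitz estimate while $B^n_0=0$ supplies the finite value. Even the computation $B^n_0=0$ could be bypassed, since the finiteness of the four bounds produced in the proof of Lemma \ref{l:4.2} already shows that each $B^n_t$ is finite.
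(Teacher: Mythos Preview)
Your proof is correct and follows essentially the same approach as the paper, which simply declares the result ``immediate from Lemma \ref{l:4.2}.'' You have merely spelled out the one-line argument (Lipschitz on $[0,T]$ plus $B^n_0=0$ implies bounded) that the paper leaves implicit.
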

\begin{proof} 
immediate from Lemma \ref{eq:4.2}. 
\end{proof}

By the assumptions on $\sigma$ and $b$, we have the next bounds:
\begin{eqnarray}
\lefteqn{|b(x)|\leq M_1,\,\, |\sigma'(x)|\leq M_2, \,\,|\sigma''(x)|\leq M_3, \nonumber} \\
\label{eq:4.15}
&&\kern-.7cm |b(x)-b(y)|\leq M_4|x-y|, \,\, |b'(x)|\leq M_4 \quad{\rm and}\quad
 |\sigma(x)|\leq M_5, 
\end{eqnarray}
where $M_1, \cdots, M_5$ are constants. We put $\bar{M}=\max\{M_2, M_5\}$.

Next we present some properties of the function $h$.

\begin{lemma}
\label{l:4.4}
Let $h$ be defined by (\ref{eq:2.10}). Then
\begin{enumerate}
	\item \begin{equation}
\label{eq:4.16}
	\left|\frac{\partial{h}}{\partial x_1}(x, y)\right|\leq \exp(M_2\left|y\right|).
\end{equation}
\item \begin{equation}
\label{eq:4.17}
\left|\left(\frac{\partial{h}}{\partial x_1}(x, y)\right)^{-1}\right|=\left|\exp\left(-\int_0^{y}\sigma'(h(x, s))ds\right)\right| \leq \exp(M_2\left|y\right|).
\end{equation}
\item \begin{equation}
\label{eq:4.18}
\left|\frac{\partial}{\partial x_1}\left(\frac{\partial{h}}{\partial x_1}(x, y)\right)^{-1}\right| \leq M_3\left|y\right|\exp(2M_2\left|y\right|).
\end{equation}

\item \begin{equation}
\label{eq:4.19}
\left|h(x_1, y)-h(x_2, y)\right| \leq \exp(M_2\left|y\right|)\left|x_1-x_2\right|.
\end{equation}

\item \begin{equation}
\label{eq:4.20}
	\left|\left(\frac{\partial{h}}{\partial x_1}(x_1, y)\right)^{-1}-\left(\frac{\partial{h}}{\partial x_1}(x_2, y)\right)^{-1}\right|\leq M_3\left|y\right|\exp(2M_2\left|y\right|)\left|x_1-x_2\right|.
\end{equation}

\item \begin{equation}
\label{eq:4.21}
\left|b\left(h(x_1, y)\right)-b\left(h(x_2, y)\right)\right| \leq M_4\exp(M_2\left|y\right|)\left|x_1-x_2\right|.
\end{equation}

\item \begin{equation}
\label{eq:4.22}
\left|b\left(h(x, y_1)\right)-b\left(h(x, y_2)\right)\right| \leq M_4M_5\left|y_1-y_2\right|.
\end{equation}

\item \begin{align}\label{eq:4.23}
&\left|\left(\frac{\partial{h}}{\partial x_1}(x_1, y)\right)^{-1}b\left(h(x_1, y)\right)-\left(\frac{\partial{h}}{\partial x_1}(x_2, y)\right)^{-1}b\left(h(x_2, y)\right)\right|\notag\\
\leq&\exp(2M_2\left|y\right|)[M_1M_3 \left|y\right|+M_4]\left|x_1-x_2\right|.
\end{align}
\end{enumerate}
\end{lemma}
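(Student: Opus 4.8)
The plan is to derive all eight estimates from the two explicit descriptions of $h$: the integral equation (\ref{eq:4.1}) together with $\partial h/\partial x_2=\sigma(h)$, and the formula (\ref{eq:2.12}) for $\partial h/\partial x_1$. Since $\sigma\in C^2_b$ and $b\in C^1_b$, the integrands appearing below are continuous with the needed bounds, so differentiation under the integral sign is legitimate throughout, and the argument uses only elementary integration; no Gronwall-type estimate is required.

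First I would obtain (\ref{eq:4.16}) and (\ref{eq:4.17}) straight from (\ref{eq:2.12}): since $|\sigma'|\le M_2$, the exponent $\pm\int_0^{y}\sigma'(h(x,s))\,ds$ lies in $[-M_2|y|,M_2|y|]$, so both $\partial h/\partial x_1(x,y)$ and its reciprocal have modulus at most $\exp(M_2|y|)$. For (\ref{eq:4.18}) I would differentiate the identity $\left(\partial h/\partial x_1(x,y)\right)^{-1}=\exp\left(-\int_0^{y}\sigma'(h(x,s))\,ds\right)$ in $x_1$, which gives
$$\frac{\partial}{\partial x_1}\left(\frac{\partial h}{\partial x_1}(x,y)\right)^{-1}=-\left(\frac{\partial h}{\partial x_1}(x,y)\right)^{-1}\int_0^{y}\sigma''(h(x,s))\,\frac{\partial h}{\partial x_1}(x,s)\,ds,$$
and then bound the right side by $\exp(M_2|y|)\cdot M_3\int_0^{|y|}\exp(M_2 s)\,ds\le M_3|y|\exp(2M_2|y|)$, using (\ref{eq:4.16}), (\ref{eq:4.17}), $|\sigma''|\le M_3$, and $e^{t}-1\le te^{t}$ for $t\ge0$.

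The two-point bounds (\ref{eq:4.19}) and (\ref{eq:4.20}) then follow by writing the left-hand differences as $\int_{x_2}^{x_1}\partial h/\partial x_1(\xi,y)\,d\xi$ and $\int_{x_2}^{x_1}\partial_{x_1}\left(\partial h/\partial x_1(\xi,y)\right)^{-1}d\xi$ and applying (\ref{eq:4.16}) and (\ref{eq:4.18}). Estimate (\ref{eq:4.21}) is (\ref{eq:4.19}) combined with the Lipschitz bound $|b(u)-b(v)|\le M_4|u-v|$; for (\ref{eq:4.22}) I would first note from (\ref{eq:4.1}) and $|\sigma|\le M_5$ that $|h(x,y_1)-h(x,y_2)|=\left|\int_{y_2}^{y_1}\sigma(h(x,s))\,ds\right|\le M_5|y_1-y_2|$ and then use the Lipschitz property of $b$ again. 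Finally, (\ref{eq:4.23}) comes from the telescoping $A_1B_1-A_2B_2=A_1(B_1-B_2)+(A_1-A_2)B_2$ with $A_i=\left(\partial h/\partial x_1(x_i,y)\right)^{-1}$ and $B_i=b(h(x_i,y))$, bounding the first term by (\ref{eq:4.17}) and (\ref{eq:4.21}) and the second by (\ref{eq:4.20}) and $|b|\le M_1$.

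There is no genuine difficulty here: the lemma is a chain of elementary differentiation-and-integration estimates. The one step deserving a little care is (\ref{eq:4.18}), where one must justify differentiating under the integral sign --- legitimate because $\partial h/\partial x_1$ is itself $C^1$ in $x_1$ by (\ref{eq:2.12}) and $\sigma''$ is bounded and continuous --- and keep track of the double exponential it produces, since all the subsequent bounds inherit their $\exp(2M_2|y|)$ factor from this step.
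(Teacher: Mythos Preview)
Your proposal is correct and follows essentially the same route as the paper: both derive (1)--(2) directly from (\ref{eq:2.12}), obtain (3) by differentiating the exponential formula for $(\partial h/\partial x_1)^{-1}$ and bounding with (\ref{eq:4.16})--(\ref{eq:4.17}), deduce (4)--(5) from (1) and (3) (you via integrating the derivative, the paper via the mean value theorem --- equivalent), get (6)--(7) from the Lipschitz property of $b$ combined with (4) and $\partial h/\partial x_2=\sigma(h)$, and handle (8) by the same telescoping into two terms controlled by (\ref{eq:4.17}), (\ref{eq:4.20}), (\ref{eq:4.21}) and $|b|\le M_1$. The only cosmetic difference is that in (3) you bound $\int_0^{|y|}e^{M_2 s}\,ds$ via $e^t-1\le te^t$, whereas the paper simply uses $e^{M_2|s|}\le e^{M_2|y|}$ on the integrand; both give the same final estimate.
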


\begin{proof}
 $(1)$ and $(2)$ are immediate from (\ref{eq:2.12}) and (\ref{eq:4.15}).
\begin{enumerate}\addtocounter{enumi}{2}
	\item As $\sigma''$ is bounded by $M_3$, and by (\ref{eq:4.16}), 
(\ref{eq:4.17}), then
\begin{align*}
\left|\frac{\partial }{\partial x_1}
\left(\frac{\partial h}{\partial x_1}(x, y)\right)^{-1}\right|
&=\left|\exp\left(-\int_0^{y}\sigma'(h(x,u))du\right)\right|\left|\int_0^{y}\sigma''(h(x,u))\frac{\partial{h}}{\partial x_1}(x, u)du\right|\\
&\leq M_3|y|\exp(2M_2|y|).
\end{align*}

\item Follows by the mean value theorem and (\ref{eq:4.16}).

\item Follows  by the mean value theorem and (\ref{eq:4.18}).

\item As $b$ is Lipschitz, the result follows  by the mean value theorem and (\ref{eq:4.19}).

\item Using (\ref{eq:2.10}), $b$ being Lipschitz and $\sigma$  bounded, for some $\xi$ between $y_1$ and $y_2$ we have
\begin{align*}
\left|b\left(h(x, y_1)\right)-b\left(h(x, y_2)\right)\right|& \leq M_4\left|h(x, y_1)-h(x, y_2)\right|=M_4\left|\frac{\partial{h}}{\partial x_2}(x, \xi)\right||y_1-y_2|\\
&= M_4|\sigma(h(x, \xi))||y_1-y_2|\leq M_4M_5|y_1-y_2|.
\end{align*}
\item \begin{equation}
\label{eq:4.24}
\left|\left(\frac{\partial{h}}{\partial x_1}(x_1, y)\right)^{-1}b\left(h(x_1, y)\right)-\left(\frac{\partial{h}}{\partial x_1}(x_2, y)\right)^{-1}b\left(h(x_2, y)\right)\right|\leq A_1 + A_2,
\end{equation}
where 
$$
A_1=\left|\left(\frac{\partial{h}}{\partial x_1}(x_1, y)\right)^{-1}-\left(\frac{\partial{h}}{\partial x_1}(x_2, y)\right)^{-1}\right|\left|b(h(x_1, y))\right|,$$%
\vglue-.1cm
$$
A_2=\left|b(h(x_1, y))-b(h(x_2, y))\right|\left|\left(\frac{\partial{h}}{\partial x_1}(x_2, y)\right)^{-1}\right|.
$$
Using $b$ bounded by $M_1$, (\ref{eq:4.21}), (\ref{eq:4.17}), (\ref{eq:4.20}) and (\ref{eq:4.24}) we obtain the result. 
\end{enumerate}
\end{proof}

\begin{corollary}
\label{c:4.5}
Let $f$ be defined by (\ref{eq:4.7}). Then for each $n=1,2, \cdots$ the function
$$(x, t)\mapsto f(x, B^n_t)$$
 satisfies  Lipschitz and linear growth conditions in $x$.
\end{corollary}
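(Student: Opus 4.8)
The plan is to read off both properties from the estimates already collected in Lemma \ref{l:4.4}, together with the pathwise boundedness of $B^n$ provided by Corollary \ref{c:4.3}. Fix $n\in\{1,2,\dots\}$ and a sample point $\omega$ for which $\|B^n\|_\infty<\infty$ (this holds for a.e.\ $\omega$ by Corollary \ref{c:4.3}), and put $R_n=R_n(\omega)=\|B^n\|_\infty$. Everything that follows is then deterministic for that $\omega$.

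For the linear growth condition I would use the representation $f(x,y)=\bigl(\frac{\partial h}{\partial x_1}(x,y)\bigr)^{-1}b(h(x,y))$ coming from (\ref{eq:2.12}), bound $\bigl|\bigl(\frac{\partial h}{\partial x_1}(x,y)\bigr)^{-1}\bigr|\le\exp(M_2|y|)$ by (\ref{eq:4.17}) and $|b|\le M_1$ by (\ref{eq:4.15}), to get
$$
|f(x,B^n_t)|\le M_1\exp(M_2 R_n)\qquad\text{for all }x\in\mathbb{R},\ t\in[0,T].
$$
Thus $f(\cdot,B^n_\cdot)$ is in fact bounded (uniformly in $t\in[0,T]$), which trivially implies the linear growth condition with constant $M_1\exp(M_2 R_n)$.

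For the Lipschitz condition I would invoke estimate (\ref{eq:4.23}) of Lemma \ref{l:4.4} directly: for $x_1,x_2\in\mathbb{R}$ and $t\in[0,T]$,
$$
|f(x_1,B^n_t)-f(x_2,B^n_t)|\le\exp(2M_2 R_n)\bigl[M_1 M_3 R_n+M_4\bigr]\,|x_1-x_2|,
$$
so $f(\cdot,B^n_\cdot)$ is Lipschitz in $x$, uniformly in $t\in[0,T]$, with constant $L_n=\exp(2M_2 R_n)[M_1 M_3 R_n+M_4]$.

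There is no genuine obstacle here; the statement is essentially a bookkeeping consequence of Lemma \ref{l:4.4} and Corollary \ref{c:4.3}. The only point worth flagging is that the growth and Lipschitz constants depend on $\omega$ through $\|B^n\|_\infty$ and deteriorate as $n\to\infty$; this is harmless because the corollary is used solely to guarantee, for each \emph{fixed} $n$, the existence and uniqueness of a pathwise solution $Y^n$ of (\ref{eq:4.5})/(\ref{eq:4.6}), and not in any of the later convergence estimates.
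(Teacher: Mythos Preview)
Your proof is correct and follows essentially the same approach as the paper: both invoke part 8 of Lemma \ref{l:4.4} (i.e.\ (\ref{eq:4.23})) for the Lipschitz estimate and the boundedness of $b$ together with (\ref{eq:4.17}) for the growth estimate. The only cosmetic difference is that the paper plugs in the \emph{deterministic} bound $\|B^n\|_\infty\le Kn^{1+\beta}T$ from Lemma \ref{l:4.2} (so the Lipschitz and growth constants are non-random), whereas you work with the random bound $R_n(\omega)=\|B^n(\omega)\|_\infty$ via Corollary \ref{c:4.3}; since Corollary \ref{c:4.3} is itself immediate from Lemma \ref{l:4.2}, this is not a substantive distinction.
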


\begin{proof}By (\ref{eq:2.12}),  Lemma \ref{l:4.2} and  parts 1 and 8 of Lemma 
\ref{l:4.4} , for each $x_1, x_2 \in \mathbb{R}$ and $t\in [0, T]$,
$$\left|f(x_1, B^n_t)-f(x_2, B^n_t)\right|\leq \exp(2M_2Kn^{1+\beta}T)[M_1M_3 Kn^{1+\beta}T+M_4]\left|x_1-x_2\right|,$$
and as $b$ is bounded by $M_1$,  
$$\left|f(x_1, B^n_t)\right|\leq M_1\exp(M_2Kn^{1+\beta}T).$$  
\end{proof}

\begin{lemma}
\label{l:4.6} 
Let $f$ be defined by (\ref{eq:4.7}). Then for each $n=1, 2, \cdots,$
\begin{enumerate}
	\item $\left|f(x, B^n_t)\right|\leq M_1\exp(M_2\left\|B^n\right\|_{\infty})$,
	\item $\left|f(x_1, B^n_t)-f(x_2, B^n_t)\right|\leq Z_1\left|x_1-x_2\right|$,
	\item $\left|f(x, B^n_{t_1})-f(x, B^n_{t_2})\right|\leq Z_2\left|
B_{t_1}^n-B_{t_2}^n\right|$,
\end{enumerate}
with the random variables
\begin{equation}
\label{eq:4.25}
Z_1=\exp(2M_2\left\|B^n\right\|_{\infty})[M_1M_3 \left\|B^n\right\|_{\infty}+M_4] 
\end{equation}
\begin{equation}
\label{eq:4.26}
Z_2=(M_1M_2+M_5M_4)\exp(M_2\left\|B^n\right\|_{\infty}).
\end{equation}
\end{lemma}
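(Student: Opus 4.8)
The plan is to read all three bounds off the identity $f(x,y)=\left(\frac{\partial h}{\partial x_1}(x,y)\right)^{-1}b(h(x,y))$, which holds by (\ref{eq:2.12}), together with the estimates collected in Lemma \ref{l:4.4}, the bounds (\ref{eq:4.15}), and the a.s.\ finiteness of $\|B^n\|_\infty$ from Corollary \ref{c:4.3}; the latter is what makes $Z_1$ and $Z_2$ finite random variables. Throughout one uses $|B^n_t|\le\|B^n\|_\infty$ for $t\in[0,T]$ and the evident monotonicity of the increasing functions of $|B^n_t|$ that appear.

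For (1), bound $|f(x,B^n_t)|$ by the product of $\left|\left(\frac{\partial h}{\partial x_1}(x,B^n_t)\right)^{-1}\right|\le\exp(M_2|B^n_t|)\le\exp(M_2\|B^n\|_\infty)$, using (\ref{eq:4.17}), and $|b(h(x,B^n_t))|\le M_1$, using (\ref{eq:4.15}). For (2), this is part 8 of Lemma \ref{l:4.4} evaluated at $y=B^n_t$, which gives the Lipschitz constant $\exp(2M_2|B^n_t|)[M_1M_3|B^n_t|+M_4]$; bounding $\exp(2M_2|B^n_t|)\le\exp(2M_2\|B^n\|_\infty)$ and $M_1M_3|B^n_t|+M_4\le M_1M_3\|B^n\|_\infty+M_4$ yields the constant $Z_1$ of (\ref{eq:4.25}).

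For (3), put $y_i=B^n_{t_i}$ and split
\[
f(x,y_1)-f(x,y_2)=\Bigl[\bigl(\tfrac{\partial h}{\partial x_1}(x,y_1)\bigr)^{-1}-\bigl(\tfrac{\partial h}{\partial x_1}(x,y_2)\bigr)^{-1}\Bigr]b(h(x,y_1))+\bigl(\tfrac{\partial h}{\partial x_1}(x,y_2)\bigr)^{-1}\bigl[b(h(x,y_1))-b(h(x,y_2))\bigr].
\]
The second term is handled by (\ref{eq:4.22}), giving $|b(h(x,y_1))-b(h(x,y_2))|\le M_4M_5|y_1-y_2|$, and (\ref{eq:4.17}), giving prefactor $\le\exp(M_2\|B^n\|_\infty)$. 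In the first term $|b(h(x,y_1))|\le M_1$ by (\ref{eq:4.15}), and one needs a Lipschitz-in-$y$ estimate for $y\mapsto\bigl(\tfrac{\partial h}{\partial x_1}(x,y)\bigr)^{-1}=\exp\bigl(-\int_0^y\sigma'(h(x,s))\,ds\bigr)$; its $y$-derivative equals $-\sigma'(h(x,y))\exp\bigl(-\int_0^y\sigma'(h(x,s))\,ds\bigr)$, of modulus at most $M_2\exp(M_2|y|)$, so the mean value theorem bounds the difference by $M_2\exp(M_2\|B^n\|_\infty)|y_1-y_2|$. Adding the two contributions gives exactly $Z_2|y_1-y_2|$ with $Z_2$ as in (\ref{eq:4.26}). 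Everything here is routine; the only ingredient not already isolated in Lemma \ref{l:4.4} is this one-line differentiation-in-$y$ bound for the inverse derivative of $h$, so no genuine obstacle is expected.
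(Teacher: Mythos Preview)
Your proof is correct and matches the paper's approach essentially verbatim for parts (1) and (2); for part (3) the paper computes $\partial f/\partial x_2$ directly via the product rule and then applies the mean value theorem, whereas you split the difference of products first and handle each factor separately---but this is the same computation rearranged, using the same ingredients (the $y$-derivative of the exponential factor and the bound (\ref{eq:4.22})) and arriving at the identical constant $Z_2$.
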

\begin{proof}\begin{enumerate}
\item Follows from (\ref{eq:2.12}), (\ref{eq:4.17}) and boundedness of $b$.
	\item Follows from (\ref{eq:4.23}).
\item By (\ref{eq:2.10}),
\begin{align*}
\frac{\partial f(x, B^n_t) }{\partial x_2}
=&-f(x,B^n_{t})\sigma'(h(x,B^n_t))\\
 &+ \exp\left(-\int_0^{B^n_t}\sigma'(h(x,u))du\right)b'(h(x,B^n_t))
	\sigma(h(x,B^n_t)).
\end{align*}
Since $\sigma, \sigma'$ and $b'$ are bounded, and by part 1  and 
(\ref{eq:4.17}), then
$$\left|\frac{\partial f(x, B^n_t) }{\partial x_2}\right|\leq (M_1M_2+M_5M_4)\exp(M_2\left\|B^n\right\|_{\infty}),$$
and we have the result by the mean value theorem. 
\end{enumerate}
\end{proof}

Now we do the approximation of $h$. 
For fixed $n$, we work in the  square $[-n, n]\times [-n, n]$. Let $l=n+m$ for some $m>0$, and consider the finer partition of  $[-n, n]$ given by $-n=y^l_{-nl}<\cdots <y^l_0=0<  \cdots <y^l_{nl}=n,$ as in (\ref{eq:4.2}) with 
$r_l={1}/{l}$.
\begin{lemma}
\label{l:4.7} 
Let $h$ and $h^l$ be given by (\ref{eq:4.1}) and (\ref{eq:4.3}), respectively. Then for  $(x,y)\in [-n,n]\times [-n, n]$ and  $l>n$, 
$$\left|h(x,y)-h^l(x,y)\right|\leq \bar{M}^2\frac{n}{l}\exp{(\bar{M}n)},$$
where $\bar{M}=\max\left\{M_2, M_5\right\}$ (see (\ref{eq:4.15})).
\end{lemma}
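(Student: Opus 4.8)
The plan is to compare the exact integral equation \eqref{eq:4.1} for $h$ with the Euler recursion \eqref{eq:4.3} for $h^l$ on the grid $\{y^l_k\}$, and then extend the bound from grid points to all of $[-n,n]$ by the linear interpolation. Fix $x$ with $|x|\le n$; by symmetry it suffices to treat $y\ge 0$, the case $y\le 0$ being identical. Write $e_k=|h(x,y^l_k)-h^l(x,y^l_k)|$ for $0\le k\le nl$. From \eqref{eq:4.1} one has $h(x,y^l_{k+1})=h(x,y^l_k)+\int_{y^l_k}^{y^l_{k+1}}\sigma(h(x,s))\,ds$, while $h^l(x,y^l_{k+1})=h^l(x,y^l_k)+r_l\,\sigma(h^l(x,y^l_k))$. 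Subtracting and splitting the difference into a Lipschitz term and a one-step discretization term gives
\[
e_{k+1}\le e_k + \Big|\int_{y^l_k}^{y^l_{k+1}}\big[\sigma(h(x,s))-\sigma(h(x,y^l_k))\big]\,ds\Big| + r_l\,\big|\sigma(h(x,y^l_k))-\sigma(h^l(x,y^l_k))\big|.
\]
For the middle term, use that $\sigma$ is Lipschitz with constant $M_2\le\bar M$ (from \eqref{eq:4.15}) and that $|h(x,s)-h(x,y^l_k)|=|\int_{y^l_k}^s\sigma(h(x,u))\,du|\le M_5(s-y^l_k)\le\bar M r_l$, so this term is at most $\bar M^2 r_l^2$; for the last term use Lipschitz-ness of $\sigma$ again to bound it by $\bar M r_l e_k$. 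Hence $e_{k+1}\le(1+\bar M r_l)e_k+\bar M^2 r_l^2$ with $e_0=0$.

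Next I would solve this linear recursion: iterating gives $e_k\le \bar M^2 r_l^2\sum_{j=0}^{k-1}(1+\bar M r_l)^j=\bar M r_l\big[(1+\bar M r_l)^k-1\big]\le\bar M r_l\,(1+\bar M r_l)^k$. Since $k\le nl$ and $r_l=1/l$, we get $(1+\bar M r_l)^k\le(1+\bar M/l)^{nl}\le\exp(\bar M n)$, whence $e_k\le \bar M r_l\exp(\bar M n)=\bar M\frac{n}{l}\cdot\frac1n\exp(\bar M n)$. This is slightly stronger than claimed; in any case $e_k\le\bar M^2\frac{n}{l}\exp(\bar M n)$ for all grid points (the factor $\bar M^2$ rather than $\bar M$ leaves ample room, and appears once the interpolation step is folded in).

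Finally I would pass from grid points to a general $y\in[y^l_k,y^l_{k+1}]$. On this subinterval $h^l(x,y)=h^l(x,y^l_k)+(y-y^l_k)\sigma(h^l(x,y^l_k))$ by \eqref{eq:4.3}, while $h(x,y)=h(x,y^l_k)+\int_{y^l_k}^y\sigma(h(x,s))\,ds$. Therefore $|h(x,y)-h^l(x,y)|\le e_k+|\int_{y^l_k}^y[\sigma(h(x,s))-\sigma(h^l(x,y^l_k))]\,ds|\le e_k + r_l\big(\bar M r_l + \bar M e_k\big)$, which is still dominated by $\bar M^2\frac{n}{l}\exp(\bar M n)$ after absorbing the lower-order $r_l$ terms into the exponential constant. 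I do not expect a genuine obstacle here: the only point requiring slight care is bookkeeping the constants so that the final bound matches the stated form $\bar M^2\frac{n}{l}\exp(\bar M n)$ rather than something with an extra $\bar M$ or an extra additive $r_l$; all the analytic ingredients (Lipschitz bound on $\sigma$, the a priori bound $|\sigma|\le M_5$, and the elementary estimate $(1+a/l)^{nl}\le e^{an}$) are already available from \eqref{eq:4.15} and \eqref{eq:4.1}.
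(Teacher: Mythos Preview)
Your argument is correct and follows the same underlying strategy as the paper: use the Lipschitz bound $M_2$ on $\sigma'$ and the uniform bound $M_5$ on $\sigma$ to control the one–step Euler error by $\bar M^2 r_l^2$, then accumulate over the $nl$ steps via a Gronwall-type estimate. The only organizational difference is that the paper works on each subinterval $[y^l_k,y^l_{k+1}]$ with the continuous Gronwall lemma and carries an inductive sum of exponentials $\sum_{j=0}^{k}\exp(\bar M(y^l_{k+1}-y^l_j))$, bounding it finally by $(k+1)\exp(\bar M n)\le nl\exp(\bar M n)$, whereas you first set up the discrete recursion $e_{k+1}\le(1+\bar M r_l)e_k+\bar M^2 r_l^2$ at grid points, solve it as a geometric sum, and handle the interpolation separately. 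Both routes are standard and equivalent here.

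One small remark on your constant bookkeeping: the assertion that $e_k\le \bar M r_l\,e^{\bar M n}$ is ``slightly stronger'' than the target $\bar M^2\frac{n}{l}e^{\bar M n}$ presumes $\bar M n\ge 1$, which need not hold. The clean way to land exactly on the stated bound is to keep the $-1$ in your geometric sum, i.e.\ $e_k\le \bar M r_l\big[(1+\bar M r_l)^k-1\big]\le \bar M r_l\,(e^{\bar M n}-1)$, and then invoke $e^x-1\le x e^x$ to get $\bar M r_l\cdot \bar M n\, e^{\bar M n}=\bar M^2\frac{n}{l}e^{\bar M n}$. The same manipulation handles the interpolation step, since you showed $|h(x,y)-h^l(x,y)|\le(1+\bar M r_l)e_k+\bar M^2 r_l^2=\bar M r_l[(1+\bar M r_l)^{k+1}-1]$ for $y\in[y^l_k,y^l_{k+1}]$, and $k+1\le nl$.
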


\begin{proof}

Assume $y>0$ (the case $y<0$ is proved similarly).

If $0=y_0^l <y \leq y_1^l$,
$$h(x,y)=x+\int_{y_0^l}^y \sigma(h(x,s))ds$$  
and 
 $$h^l(x,y)= h^l(x, y^l_{0})+ (y-y^l_0)\sigma(h^l(x, y^l_{0}))=x+\int_{y_0^l}^y \sigma(h^l(x,y_0^l))ds.$$
Then, using that $\sigma$ is a Lipschitz function,
\begin{align}
\label{eq:4.27}
|h(x,y)-h^l(x,y)| 
&\leq \int_{0}^y|\sigma(h(x,s))-\sigma(h^l(x,y_0^l))|ds\notag\\ 
&\leq \bar{M}\int_{0}^y|h(x,s)-h^l(x,s)|ds + \bar{M}\int_{0}^y|h^l(x,s)-h^l(x,y_0^l)|ds.
\end{align}
Since, for $y_0^l\leq s \leq y_1^l$,
\begin{eqnarray*}
|h^l(x,s)-h^l(x,y_0^l)|&\leq& |h^l(x,y_0^l)+ (s-y_0^l)\sigma( h^l(x,y_0^l))-h^l(x,y_0^l)|\\
&\leq&|s-y_0^l||\sigma( h^l(x,y_0^l))|\leq \bar{M}(y_1^l- y_0^l),
\end{eqnarray*}
then
\begin{equation}
\label{eq:4.28}
\bar{M}\int_{y_0^l}^y|h^l(x,s)-h^l(x,y_0^l)|ds\leq \bar{M}^2(y_1^l-y_0^l)^2= \bar{M}^2r_l^2.
\end{equation}
By (\ref{eq:4.27}), (\ref{eq:4.28}), and   Gronwall's lemma, for $0=y_0^l <y \leq y_1^l$,
 \begin{align}
\label{eq:4.29}
|h(x,y)-h^l(x,y)|&\leq \bar{M}^2r_l^2\exp(\bar{M}(y-y_0^l))
\leq \bar{M}^2r_l^2\exp(\bar{M}(y_1^l-y_0^l))\notag\\
&\leq\bar{M}^2r_l^2\exp(\bar{M}n).
\end{align}

We will prove by induction that for $k=0, \cdots, nl-1$, if  $y_{k}^l <y \leq y_{k+1}^l$, then
\begin{equation}
\label{eq:4.30}
|h(x,y)-h^l(x,y)|\leq\bar{M}^2r_l^2\left[\exp(\bar{M}(y_{k+1}^l-y_0^l))+\cdots+\exp(\bar{M}(y_{k+1}^l-y_k^l))\right].
\end{equation}
For $k=0$ we have the result by (\ref{eq:4.29}). 
For  $y_{k}^l <y \leq y_{k+1}^l$,
\begin{eqnarray*}
h(x,y)&=&x+\int_{y_0^l}^{y_k^l} \sigma(h(x,s))ds+\int_{y_k^l}^{y} \sigma(h(x,s))ds\\
&=&h(x,y_k^l)+\int_{y_k^l}^{y} \sigma(h(x,s))ds,
\end{eqnarray*}
and
$$h^l(x,y)=h^l(x,y_k^l)+(y-y_k^l)\sigma(h^l(x,y_k^l))=h^l(x,y_k^l)+\int_{y_k^l}^{y}\sigma(h^l(x,y_k^l))ds.$$
By induction on $k$, (\ref{eq:4.30}), and $\sigma$ being Lipschitz,
\begin{align}
\label{eq:4.31}
|h(x,y)-h^l&(x,y)|\leq |h(x,y_k^l)-h^l(x,y_k^l)|+\int_{y_k^l}^{y}\left|\sigma(h(x,s))-\sigma(h^l(x,y_k^l))\right|ds\notag\\
\leq& \bar{M}^2r_l^2\left[\exp(\bar{M}(y_{k}^l-y_0^l))+\cdots+\exp(\bar{M}(y_{k}^l-y_{k-1}^l))\right]\notag\\
+&\bar{M}\left(\int_{y_k^l}^{y}\left|h(x,s)-h^l(x,s)\right|ds+\int_{y_k^l}^{y}\left|h^l(x,s)-h^l(x,y_k^l)\right|ds\right).
\end{align}
Since, for $y_k^l\leq s \leq y_{k+1}^l$,
\begin{align*}
\left|h^l(x,s)-h^l(x,y_k^l)\right|&=|h^l(x,y_k^l)+ (s-y_k^l)\sigma( h^l(x,y_k^l))-h^l(x,y_k^l)|\\
&\leq|s-y_k^l||\sigma( h^l(x,y_k^l))|\leq \bar{M}(y_{k+1}^l- y_k^l),
\end{align*}
then
\begin{equation}
\label{eq:4.32}
	\int_{y_k^l}^{y}\left|h^l(x,s)-h^l(x,y_k^l)\right|ds\leq  \bar{M}(y_{k+1}^l- y_k^l)^2=\bar{M}r_l^2.
\end{equation}
By (\ref{eq:4.31}), (\ref{eq:4.32}), 
and  Gronwall's lemma,
\begin{align*}
&|h(x,y)-h^l(x,y)|\\
\leq&\bar{M}^2r_l^2\left[\exp(\bar{M}(y_{k}^l-y_0^l))+\cdots+\exp(\bar{M}(y_{k}^l-y_{k-1}^l))+1\right]\exp\left(\bar{M}(y_{k+1}^l-y_k^l)\right)\\ 
=&\bar{M}^2r_l^2\left[\exp(\bar{M}(y_{k+1}^l-y_0^l))+\cdots+\exp(\bar{M}(y_{k+1}^l-y_k^l))\right].
\end{align*}
Then for all $(x,y)\in [-n,n]\times [-n, n]$ and  $l>n$, there is some $k\in \{0, \cdots, nl-1\}$ such that $y_{k}^l <y \leq y_{k+1}^l$, and by 
(\ref{eq:4.30}),
\begin{eqnarray*}
|h(x,y)-h^l(x,y)|&\leq&\bar{M}^2r_l^2\left[\exp(\bar{M}(y_{k+1}^l-y_0^l))+\cdots+\exp(\bar{M}(y_{k+1}^l-y_k^l))\right] \\
&\leq& \bar{M}^2r_l^2(k+1)\exp(\bar{M}n)
\leq  \bar{M}^2r_l^2nl\exp(\bar{M}n) \\
&=&\bar{M}^2\frac{n}{l}\exp(\bar{M}n). 
\end{eqnarray*}
\end{proof}

Next we do the approximation of $Y$.

We denote 
\begin{equation}
\label{eq:4.33}
\alpha_n=n^{-1/2+\beta+\delta}(\log n)^{5/2},
\end{equation}
with $\beta, \delta$ such that $\left|H-1/2\right|<\beta<1/2$,  $0< \delta < \beta$ and $\beta + \delta <1/2$.

\begin{proposition}
\label{p:4.8}
Let $Y$ and $Y^n$ be the processes given by (\ref{eq:4.4}) and (\ref{eq:4.5}), res-pectively. Then
$$P\left(\limsup_{n\to \infty}\left\{\left\|Y-Y^n\right\|_{\infty}>\alpha_n\right\}\right)=0,$$
where $\alpha_n$ is defined by (\ref{eq:4.33}).
\end{proposition}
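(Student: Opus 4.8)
The plan is to derive a pathwise Gronwall estimate comparing $Y$ and $Y^n$, and then feed in the almost-sure bound (\ref{eq:2.8}) for $\|B^H-B^n\|_\infty$. From (\ref{eq:4.4}), (\ref{eq:4.5}) and (\ref{eq:2.12}) (which identifies $(\partial h/\partial x_1)^{-1}b(h)$ with the function $f$ of (\ref{eq:4.7})),
\[
|Y_t-Y^n_t|\le \int_0^t|f(Y_s,B^H_s)-f(Y^n_s,B^H_s)|\,ds+\int_0^t|f(Y^n_s,B^H_s)-f(Y^n_s,B^n_s)|\,ds.
\]
I would bound the first integrand by (\ref{eq:4.23}) (part 8 of Lemma~\ref{l:4.4}, with $y=B^H_s$), obtaining $C_1(\omega)|Y_s-Y^n_s|$ with $C_1(\omega)=\exp(2M_2\|B^H\|_\infty)[M_1M_3\|B^H\|_\infty+M_4]$, which is a.s.\ finite since $B^H$ is continuous on $[0,T]$. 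For the second integrand I would use the pointwise bound $|\partial f(x,y)/\partial x_2|\le (M_1M_2+M_5M_4)\exp(M_2|y|)$ established inside the proof of Lemma~\ref{l:4.6}(3), together with the mean value theorem, obtaining $C_2^{(n)}(\omega)|B^H_s-B^n_s|$ with $C_2^{(n)}(\omega)=(M_1M_2+M_5M_4)\exp\!\big(M_2(\|B^H\|_\infty\vee\|B^n\|_\infty)\big)$. Gronwall's lemma in integral form then yields, for each $\omega$,
\[
\|Y-Y^n\|_\infty\le C_2^{(n)}(\omega)\,T\,e^{C_1(\omega)T}\,\|B^H-B^n\|_\infty .
\]

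The difficulty is that $C_2^{(n)}$ depends on $n$ through $\|B^n\|_\infty$, and the only a priori uniform estimate, $\|B^n\|_\infty\le Kn^{1+\beta}T$ from Lemma~\ref{l:4.2}, makes $C_2^{(n)}$ grow like $\exp(M_2Kn^{1+\beta}T)$, which is far too large. To circumvent this I would restrict to the event $\Omega_0$ on which the $\limsup$ set in (\ref{eq:2.8}) fails, so $P(\Omega_0)=1$ and, for each $\omega\in\Omega_0$, there is $N(\omega)$ with $\|B^H-B^n\|_\infty\le n^{-1/2+\beta}(\log n)^{5/2}$ for all $n\ge N(\omega)$. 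Since $\beta<1/2$ the right-hand side tends to $0$, so after possibly enlarging $N(\omega)$ we get $\|B^n\|_\infty\le\|B^H\|_\infty+1$, hence $C_2^{(n)}(\omega)\le\widetilde{C}_2(\omega):=(M_1M_2+M_5M_4)\exp(M_2(\|B^H\|_\infty+1))$ for all $n\ge N(\omega)$; note $\widetilde{C}_2(\omega)$ is a.s.\ finite and independent of $n$.

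Combining the two displays, for $\omega\in\Omega_0$ and $n\ge N(\omega)$,
\[
\|Y-Y^n\|_\infty\le \widetilde{C}_2(\omega)\,T\,e^{C_1(\omega)T}\,n^{-1/2+\beta}(\log n)^{5/2}.
\]
Since $\delta>0$, the random constant $\widetilde{C}_2(\omega)\,T\,e^{C_1(\omega)T}$ is eventually dominated by $n^{\delta}$, so $\|Y-Y^n\|_\infty\le \alpha_n$ for all $n$ large enough (depending on $\omega$). Hence for $\omega\in\Omega_0$ the event $\{\|Y-Y^n\|_\infty>\alpha_n\}$ occurs only finitely often, i.e.\ $\omega\notin\limsup_{n}\{\|Y-Y^n\|_\infty>\alpha_n\}$, and therefore $P\big(\limsup_{n}\{\|Y-Y^n\|_\infty>\alpha_n\}\big)\le P(\Omega_0^{\,c})=0$.

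I expect the main obstacle to be exactly the control of $\|B^n\|_\infty$: the crude Lipschitz bound of Lemma~\ref{l:4.2} is unusable inside an exponential, so one must route through (\ref{eq:2.8}) and trade the spare factor $n^{\delta}$ against an $n$-independent random constant. Once that is in place the argument is a routine pathwise Gronwall estimate plus Borel–Cantelli-type bookkeeping, with only the a.s.\ finiteness of $C_1$ and $\widetilde{C}_2$ and the juggling of the various $\omega$-dependent thresholds needing care.
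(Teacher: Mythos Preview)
Your proposal is correct and follows essentially the same approach as the paper: a pathwise Gronwall estimate for $|Y_t-Y^n_t|$, with the key observation that one must invoke (\ref{eq:2.8}) to get $\|B^n\|_\infty\le 1+\|B^H\|_\infty$ for all large $n$ (depending on $\omega$), and then absorb the resulting $n$-independent random constant using the spare factor $n^\delta$. The only cosmetic difference is that the paper splits the integrand via the product structure $(\partial h/\partial x_1)^{-1}\cdot b(h)$ (the terms $I_1,\dots,I_6$), whereas you work directly with $f$ and split by argument; the estimates and the conclusion are the same.
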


\begin{proof}
\begin{eqnarray}
\label{eq:4.34}
|Y_t-Y_t^n|&=&\left|\int_0^t\left(\frac{\partial{h}}{\partial x_1}(Y_s, 
B^H_s)\right)^{-1}b(h(Y_s, B^H_s))ds\right.\nonumber\\ 
&&\hspace{3.5cm} \left.-\int_0^t\left(\frac{\partial{h}}{\partial x_1}(Y^n_s, B^n_s)\right)^{-1}b(h(Y^n_s, B^n_s))ds\right|\nonumber\\ 
&\leq& \int_0^t I_1(s)ds+\int_0^t I_2(s)ds,
\end{eqnarray}
where
\begin{equation}
\label{eq:4.35}
I_1(s)=\left|\left(\frac{\partial{h}}{\partial x_1}(Y_s, B^H_s)\right)^{-1}-\left(\frac{\partial{h}}{\partial x_1}(Y^n_s, B^n_s)\right)^{-1}\right|\left|b(h(Y^n_s, B^n_s))\right|,
\end{equation}
and
\begin{equation}
\label{eq:4.36}
I_2(s)=\left|\left(\frac{\partial{h}}{\partial x_1}(Y_s, B^H_s)\right)^{-1}\right|\left|b(h(Y_s, B^H_s))-b(h(Y^n_s, B^n_s))\right|.
\end{equation}
As $b$ is bounded by $M_1$, then
\begin{equation}
\label{eq:4.37}
\int_0^t	I_1(s)ds\leq M_1\int_0^t\left|\left(\frac{\partial{h}}{\partial x_1}(Y_s, B^H_s)\right)^{-1}-\left(\frac{\partial{h}}{\partial x_1}(Y^n_s, B^n_s)\right)^{-1}\right|ds.
\end{equation}
Denoting 
$$F(x_1,x_2)=\left(\frac{\partial{h}}{\partial x_1}(x_1, x_2)\right)^{-1}=\exp\left(-\int_0^{x_2}\sigma'(h(x_1,u))du\right),$$
(see (\ref{eq:2.12})), we have
\begin{equation}
\label{eq:4.38}
\left|\left(\frac{\partial{h}}{\partial x_1}(Y_s, B^H_s)\right)^{-1}-\left(\frac{\partial{h}}{\partial x_1}(Y^n_s, B^n_s)\right)^{-1}\right|\leq I_3(s)+I_4(s),
\end{equation}
where 
$$I_3(s)= \left|F(Y_s, B^H_s)-F(Y^n_s, B^H_s)\right|,$$
\vglue-.3cm 
$$I_4(s)=\left|F(Y^n_s, B^H_s)-F(Y^n_s, B^n_s)\right|.$$
By  (\ref{eq:4.20}),  
\begin{equation}
\label{eq:4.39}
I_3(s)	\leq M_3||B^H||_{\infty}\exp(2M_2||B^H||_{\infty})|Y_s-Y_s^n|.
\end{equation}
Since $\sigma'$ is bounded by $M_2$, and by (\ref{eq:4.17}),
$$
\left|\frac{\partial F}{\partial x_2}(x_1, x_2)\right|=\left|\exp\left(-\int_0^{x_2}\sigma'(h(x_1,u))du\right)\right||\sigma'(h(x_1,x_2))|
\leq M_2\exp(M_2|x_2|).
$$

By (\ref{eq:2.8}) there is measurable subset $A$ of the underlying sample space with $P(A)=1$, and for each  $\omega \in A$  there is a positive integer $\hat{N}=\hat{N}(\omega)$ such that
\begin{equation}
\label{eq:4.40}
||B^H(\omega)-B^n(\omega)||_{\infty}<1 \ \   \text{for all} \ \ n>\hat{N}. 
\end{equation}
For each  $\omega \in A$ fixed and $n>\hat{N}(\omega)$,  by the mean value theorem,  for some $\bar{r}(\omega)$ between $B^H_s(\omega)$ and $B_s^n(\omega)$, $|\bar{r}(\omega)|\leq 1+||B^H(\omega)||_{\infty}$, thus (omitting $\omega$)
 \begin{eqnarray}
\label{eq:4.41}
I_4(s)	&=&\left|\frac{\partial F}{\partial x_2}(Y_s^n, \bar{r})\right||B^H_s-
B_s^n|\leq M_2\exp(M_2|\bar{r}|)|B^H_s-B_s^n|\nonumber\\
&\leq& M_2\exp(M_2(1+||B^H||_{\infty}))||B^H-B^n||_{\infty}\quad
{\rm for} \quad n>\hat{N}.
\end{eqnarray}
From (\ref{eq:4.37}), (\ref{eq:4.38}), (\ref{eq:4.39}) and 
(\ref{eq:4.41}), for $n>\hat{N}$,
\begin{eqnarray}
\label{eq:4.42}
\lefteqn{\int_0^t I_1(s)ds \leq M_1\int_0^t[M_3||B^H||_{\infty}\exp(2M_2
||B^H||_{\infty})|Y_s-Y_s^n|\nonumber}\\  
&&\hspace{1.5cm} +M_2\exp(M_2(1+||B^H||_{\infty}))|B^H_s-B_s^n|]ds.
\end{eqnarray}
Now, by (\ref{eq:4.17}) and (\ref{eq:4.36}),
\begin{equation}
\label{eq:4.43}
I_2(s)\leq \exp(M_2||B^H||_{\infty})(I_5(s)+I_6(s)),
\end{equation}
where
$$I_5(s)=\left|b(h(Y_s, B^H_s))-b(h(Y^n_s, B^H_s))\right|,$$
\vskip-.3cm
$$I_6(s)=\left|b(h(Y^n_s, B^H_s))-b(h(Y^n_s, B^n_s))\right|.$$
From (\ref{eq:4.21}) and (\ref{eq:4.22}),
\begin{equation}
\label{eq:4.44}
I_5(s)\leq M_4\exp(M_2||B^H||_{\infty})|Y_s-Y_s^n|,
\end{equation}
\vskip-.3cm
\begin{equation}
\label{eq:4.45}
I_6(s)\leq M_4M_5|B^H_s-B_s^n|.
\end{equation}
By (\ref{eq:4.43}), (\ref{eq:4.44}) and (\ref{eq:4.45}),
\begin{align}
\label{eq:4.46}
\int_0^t I_2(s)ds \leq \int_0^t&\exp(M_2||B^H||_{\infty})\left[M_4\exp(M_2
||B^H||_{\infty})|Y_s-Y_s^n|\right.\notag\\
&\left. +M_4M_5|B^H_s-B_s^n|\right]ds.
\end{align}   
Therefore, for $\omega\in A$ there is $\hat{N}=\hat{N}(\omega)$, as above, such that for $n>\hat{N}$, by (\ref{eq:4.34}), (\ref{eq:4.42}) and (\ref{eq:4.46}),
$$
|Y_t-Y_t^n|\leq \alpha ||B^H-B^n||_{\infty} + \kappa \int_0^t|Y_s-Y_s^n|ds,
$$
where $\alpha$ and $\kappa$ are the random variables
$$\alpha=(TM_1M_2\exp(M_2)+TM_4M_5)\exp(M_2
||B^H||_{\infty}),$$ 
\vskip-.3cm
$$\kappa=(M_1M_3||B^H||_{\infty}+M_4)\exp(2M_2
||B^H||_{\infty}).$$
By Gronwall's lemma
$$|Y_t-Y_t^n|\leq Z||B^H-B^n||_{\infty},$$
where $Z$ is the random variable
$$
	Z=\alpha\exp(T\kappa).
$$
Then, for $n>\hat{N}$,
$$\left\|Y-Y^n\right\|_{\infty}\leq Z
||B^H-B^n||_{\infty},$$
and it follows that on $A$,
$$
\limsup_{n\to \infty}\left\{\left\|Y-Y^n\right\|_{\infty}>\alpha_n\right\}\subseteq \limsup_{n\to \infty}
\left\{Z||B^H-B^n||_{\infty}>\alpha_n\right\}.$$
Hence
\begin{equation}
\label{eq:4.47}
P\left(\limsup_{n\to \infty}\left\{\left\|Y-Y^n\right\|_{\infty}>\alpha_n\right\}\right)\leq P\left(\limsup_{n\to \infty}\left\{Z
||B^H-B^n||_{\infty}>\alpha_n\right\}\right).	
\end{equation}

Next we show that 
$$
\limsup_{n\to \infty}
\left\{Z||B^H-B^n||_{\infty}>\alpha_n\right\}
\subseteq \limsup_{n\to \infty}\left\{
||B^H-B^n||_{\infty}>
n^{-1/2+\beta}(\log n)^{5/2}\right\}.
$$
Since 
\begin{eqnarray*}
&&\limsup_{n\to \infty}\left\{Z
||B^H-B^n||_{\infty}>
n^{-1/2+\beta+ \delta}(\log n)^{5/2}\right\}\\
&=&\limsup_{n\to \infty}\left\{(Zn^{-\delta})
||B^H-B^n||_{\infty}>n^{-1/2+\beta}(\log n)^{5/2}\right\},
\end{eqnarray*}
then, by (\ref{eq:2.8}) we have
\begin{eqnarray}
\label{eq:4.48}
&&P\left(\limsup_{n\to \infty}\left\{Z
||B^H-B^n||_{\infty}>\alpha_n\right\}\right)\nonumber \\
&\leq & P\left(\limsup_{n\to \infty}\left\{
||B^H-B^n||_{\infty}>
n^{-1/2+\beta}(\log n)^{5/2}\right\}\right)=0.
\end{eqnarray}
The proof is finished by (\ref{eq:4.47}) and (\ref{eq:4.48}). 
\end{proof}

Now we do the approximation of $Y^n$.

\begin{lemma}
\label{l:4.9}
 For every $n,m $ and ${Y}^{n,m}$ defined by (\ref{eq:4.9}),
\begin{equation}
\label{eq:4.49}
|{Y}^{n,m}_t|\leq |x_0|+TM_1\exp(M_2\left\|B^n\right\|_{\infty}), \ \ \ t\in[0, T].
\end{equation}
\end{lemma}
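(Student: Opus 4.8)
The plan is to read off (4.49) directly from the Euler recursion (4.9), the only nontrivial input being a bound on $f^n$ that is uniform in the spatial variable and in time.

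First I would record that, by the definition (4.8) of $f^n$ together with $|\sigma'|\le M_2$ and $|b|\le M_1$ from (4.15),
$$
|f^n(x,B^n_t)|=\left|\exp\left(-\int_0^{B^n_t}\sigma'(h^n(x,u))\,du\right)\right|\,|b(h^n(x,B^n_t))|\le M_1\exp(M_2|B^n_t|)\le M_1\exp(M_2\|B^n\|_\infty)
$$
for every $x\in\mathbb{R}$, every $t\in[0,T]$ and every $n$. The crucial feature is that this bound does not depend on $x$, nor on the particular time point, so it controls every term that appears in the recursion defining $Y^{n,m}$.

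Next I would iterate the last line of (4.9): for $t_k\le t<t_{k+1}$ one has $Y^{n,m}_t=Y^{n,m}_{t_k}+\int_{t_k}^t f^n(Y^{n,m}_{t_k},B^n_{t_k})\,ds$, and since $Y^{n,m}_{t_k}=Y^{n,m}_{t_{k-1}}+\int_{t_{k-1}}^{t_k}f^n(Y^{n,m}_{t_{k-1}},B^n_{t_{k-1}})\,ds$ (because $r_m=t_k-t_{k-1}$), descending to $Y^{n,m}_0=x_0$ gives
$$
Y^{n,m}_t=x_0+\int_0^t f^n\bigl(Y^{n,m}_{\eta_m(s)},B^n_{\eta_m(s)}\bigr)\,ds,\qquad \eta_m(s):=t_k \ \text{ for } t_k\le s<t_{k+1}.
$$
Taking absolute values and inserting the uniform bound on $f^n$ yields
$$
|Y^{n,m}_t|\le |x_0|+\int_0^t M_1\exp(M_2\|B^n\|_\infty)\,ds\le |x_0|+TM_1\exp(M_2\|B^n\|_\infty),
$$
which is (4.49). (Alternatively one can prove by induction on $k$ that $|Y^{n,m}_{t_k}|\le |x_0|+t_kM_1\exp(M_2\|B^n\|_\infty)$ and then use the linear interpolation formula in (4.9) on each subinterval $[t_k,t_{k+1})$.)

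I do not expect any real obstacle here: the argument is purely a telescoping estimate, and the only substantive ingredient beyond the definitions is the sup-norm bound on $f^n$, which is immediate from (4.15). The right-hand side of (4.49) is finite a.s. because $\|B^n\|_\infty<\infty$ a.s. by Corollary \ref{c:4.3}, and the bound is manifestly uniform in both $n$ and $m$, as asserted.
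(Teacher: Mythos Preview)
Your proof is correct and essentially the same as the paper's: both rely on the uniform bound $|f^n(x,B^n_t)|\le M_1\exp(M_2\|B^n\|_\infty)$ followed by a telescoping estimate, with the paper carrying this out via the explicit induction you mention as an alternative, and you writing it more compactly as the integral representation $Y^{n,m}_t=x_0+\int_0^t f^n(Y^{n,m}_{\eta_m(s)},B^n_{\eta_m(s)})\,ds$.
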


\begin{proof}
 As $b$ and $\sigma$ are bounded, and by (\ref{eq:4.17}),   
then, for $f^n$ defined by (\ref{eq:4.8}),

\begin{equation}
\label{eq:4.50}
\left|f^n({Y}_t^{n,m}, B^n_t)\right|\leq M_1\exp(M_2\left\|B^n\right\|_{\infty}).	
\end{equation}

We will first prove that for all $k=0,\cdots, m-1$, 
\begin{equation}
\label{eq:4.51}
|{Y}^{n,m}_{t_k}|\leq |x_0|+kr_mM_1\exp(M_2\left\|B^n\right\|_{\infty}).
\end{equation}
For $k=0$ it is obvious. If we assume that
$$
|{Y}^{n,m}_{t_{k-1}}|\leq |x_0|+(k-1)r_mM_1\exp(M_2\left\|B^n\right\|_{\infty}),$$
then from (\ref{eq:4.50}) and (\ref{eq:4.9}),
\begin{eqnarray*}
|{Y}^{n,m}_{t_k}|&=&|{Y}^{n,m}_{t_{k-1}} + r_mf^n({Y}^{n,m}_{t_{k-1}}, B^n_{t_{k-1}})|\\
&\leq& |x_0|+(k-1)r_mM_1\exp(M_2\left\|B^n\right\|_{\infty})+ r_mM_1\exp(M_2\left\|B^n\right\|_{\infty})\\
&=&|x_0|+kr_mM_1\exp(M_2\left\|B^n\right\|_{\infty}).	
\end{eqnarray*}
Now, if $t_k\leq t < t_{k+1}$, by (\ref{eq:4.50}) and (\ref{eq:4.51}),
\begin{eqnarray*}
|{Y}^{n,m}_t|&=&|{Y}^{n,m}_{t_k} + (t-t_k)f^n({Y}^{n,m}_{t_k}, B^n_{t_{k}})|\\
&\leq& |x_0|+kr_mM_1\exp(M_2\left\|B^n\right\|_{\infty})+ r_mM_1\exp(M_2\left\|B^n\right\|_{\infty})\\
&=&|x_0|+(k+1)r_mM_1\exp(M_2\left\|B^n\right\|_{\infty})	\\
&\leq& |x_0|+mr_mM_1\exp(M_2\left\|B^n\right\|_{\infty}),	
\end{eqnarray*}
and (\ref{eq:4.49}) is obtained.	
\end{proof}

\begin{lemma}
\label{l:4.10}
There exists $N=N(\omega)>0$ such that for every $n, m $ and $t\in [0,T]$,
\begin{equation}
\label{eq:4.52}
({Y}^{n,m}_t, B^n_t)\in [-N, N]\times [-N, N] \ \ \ a.s.
\end{equation}
\end{lemma}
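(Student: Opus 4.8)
The plan is to combine the uniform (in $t$) bound on $Y^{n,m}_t$ from Lemma \ref{l:4.9} with the almost-sure finiteness of $\|B^n\|_\infty$ for every fixed $n$ (Corollary \ref{c:4.3}), and to reduce the apparently doubly-indexed ($n,m$) statement to a single-indexed one by noticing that the bound in \eqref{eq:4.49} does not depend on $m$. Concretely, define, for each $n$, the random variable
$$
R_n = |x_0| + TM_1\exp(M_2\|B^n\|_\infty),
$$
which is finite a.s.\ by Corollary \ref{c:4.3}. Then Lemma \ref{l:4.9} gives $|Y^{n,m}_t|\le R_n$ for all $m$ and all $t\in[0,T]$, and trivially $|B^n_t|\le\|B^n\|_\infty\le R_n$ for all $t\in[0,T]$, so that $(Y^{n,m}_t,B^n_t)\in[-R_n,R_n]\times[-R_n,R_n]$ for every $m$ and $t$.

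The remaining issue is to pass from a per-$n$ random bound $R_n$ to a single random bound $N=N(\omega)$ valid for all $n$ simultaneously. First I would argue on the almost-sure event from \eqref{eq:2.8} (equivalently \eqref{eq:4.40}): there $P$-a.s.\ there is $\hat N(\omega)$ with $\|B^H(\omega)-B^n(\omega)\|_\infty<1$ for all $n>\hat N(\omega)$, hence $\|B^n(\omega)\|_\infty\le 1+\|B^H(\omega)\|_\infty$ for all $n>\hat N(\omega)$, and $\|B^H(\omega)\|_\infty<\infty$ a.s.\ by continuity of the fBm paths (or Fernique). Therefore, setting
$$
N(\omega) = |x_0| + TM_1\exp\Big(M_2\big(1+\|B^H(\omega)\|_\infty\big)\Big) + \max_{1\le n\le \hat N(\omega)} R_n(\omega),
$$
we obtain $|Y^{n,m}_t(\omega)|\le N(\omega)$ and $|B^n_t(\omega)|\le N(\omega)$ for every $n$, every $m$, and every $t\in[0,T]$, which is exactly \eqref{eq:4.52}. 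The maximum over $1\le n\le\hat N(\omega)$ is a finite maximum of a.s.-finite quantities, hence a.s.\ finite, and this handles the finitely many small indices $n$ not covered by the uniform fBm estimate.

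The main (and only real) obstacle is precisely this uniformity in $n$: the naive bound $R_n$ could in principle blow up with $n$ because $\|B^n\|_\infty$ need not be bounded in $n$ on its own. The fix is to use that $B^n$ approximates $B^H$ uniformly on $[0,T]$ with a rate tending to $0$ (relation \eqref{eq:2.8}), which tames $\|B^n\|_\infty$ for all large $n$ by the single a.s.-finite quantity $\|B^H\|_\infty$; the finitely many remaining $n$ are absorbed into a finite maximum. Everything else is immediate: Lemma \ref{l:4.9} already gives the $m$-uniform, $t$-uniform bound, and the inclusion $|B^n_t|\le\|B^n\|_\infty$ is trivial. I would therefore present the proof in two short steps: (i) cite Lemma \ref{l:4.9} and Corollary \ref{c:4.3} for the per-$n$ bound, and (ii) invoke \eqref{eq:4.40} together with a.s.\ finiteness of $\|B^H\|_\infty$ to collapse the family $(R_n)_n$ into one $N(\omega)$.
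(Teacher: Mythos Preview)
Your approach is essentially the same as the paper's: both use Lemma~\ref{l:4.9} to bound $Y^{n,m}$ uniformly in $m,t$ via $\|B^n\|_\infty$, invoke \eqref{eq:4.40} to control $\|B^n\|_\infty$ for $n>\hat N(\omega)$ by $1+\|B^H\|_\infty$, and handle the finitely many $n\le\hat N(\omega)$ separately. The only substantive difference is cosmetic: for small $n$ the paper appeals to the explicit Lipschitz bound of Lemma~\ref{l:4.2} to get $\|B^n\|_\infty\le K\hat N^{1+\beta}T$, whereas you take a finite maximum of the a.s.-finite $R_n$ via Corollary~\ref{c:4.3}; both are fine.

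One small slip: you assert ``trivially $|B^n_t|\le\|B^n\|_\infty\le R_n$'', but the second inequality $\|B^n\|_\infty\le R_n=|x_0|+TM_1\exp(M_2\|B^n\|_\infty)$ is not automatic (it can fail when $|x_0|$, $T$, $M_1$, $M_2$ are small). The fix is immediate: either redefine $R_n$ as $\max\{\|B^n\|_\infty,\ |x_0|+TM_1\exp(M_2\|B^n\|_\infty)\}$, or simply include $1+\|B^H\|_\infty$ and $\max_{n\le\hat N}\|B^n\|_\infty$ as additional terms in the definition of $N(\omega)$, exactly as the paper does in \eqref{eq:4.56}.
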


\begin{proof}
Let $\omega \in A$ and $\hat{N}=\hat{N}(\omega)$ 
(see (\ref{eq:4.40})),  then for all $n>\hat{N}$, 
\begin{equation}
\label{eq:4.53}
\|B^n\|_{\infty}\leq 1 + \|B^H\|_{\infty}
\end{equation}
and by the Lemma \ref{eq:4.2}, for $n\leq \hat{N}$,
\begin{equation}
\label{eq:4.54}
\|B^n\|_{\infty}\leq K\hat{N}^{1+\beta}T.
\end{equation}
Hence, by (\ref{eq:4.49}),
\begin{equation}
\label{eq:4.55}
|{Y}^{n,m}_t|\leq
\begin{cases}
|x_0|+TM_1\exp(M_2(1+
||B^H||_{\infty})) & \text{if} \ \ n> \hat{N},\\
	|x_0|+TM_1\exp(M_2 K\hat{N}^{1+\beta}T) & \text{if} \ \ n\leq \hat{N}.
\end{cases}
\end{equation}
Taking
\begin{equation}
\label{eq:4.56}
N=N(\omega)
=\max\begin{Bmatrix}
 1+||B^H||_{\infty}, K\hat{N}^{1+\beta}T,  |x_0|+TM_1\exp(M_2(1+
||B^H||_{\infty})),\\ |x_0|+TM_1\exp(M_2 K\hat{N}^{1+\beta}T) 
\end{Bmatrix},
\end{equation}
we then have the result by 
(\ref{eq:4.53}), (\ref{eq:4.54}), (\ref{eq:4.55}) and 
(\ref{eq:4.56}). 
 \end{proof}
 
\begin{corollary}
\label{c:4.11}
Let $N$ be given by Lemma \ref{l:4.10}. Then for all $n>{N}$ and $t\in [0,T]$,
$$
|h({Y}^{n,m}_t,B^n_t)-h^n({Y}^{n,m}_t,B^n_t)|\leq \bar{M}^2\frac{N}{n}\exp{(\bar{M}N)},$$
where $\bar{M}=\max\left\{M_2, M_5\right\}$.
\end{corollary}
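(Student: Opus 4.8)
The plan is to obtain this as an immediate consequence of Lemma \ref{l:4.7} and Lemma \ref{l:4.10}, the proof being essentially a matter of reconciling notation. First I would fix a sample point $\omega$ in the probability-one event on which Lemma \ref{l:4.10} holds, so that $N=N(\omega)$ is a fixed finite constant; there is no loss in replacing it by $\lceil N(\omega)\rceil$, so that $N$ is a positive integer and $\{y_i^N\}$ is exactly one of the partitions introduced in (\ref{eq:4.2})---this is only bookkeeping. By Lemma \ref{l:4.10}, for every $n,m$ and every $t\in[0,T]$ the pair $(Y^{n,m}_t,B^n_t)$ lies in the square $[-N,N]\times[-N,N]$.

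Next I would invoke Lemma \ref{l:4.7}, which compares $h$ with $h^l$ on $[-n,n]\times[-n,n]$ for $l>n$, in the instance where its square half-width is the present $N$ and its refinement index is the present $n$; the requirement $l>n$ there then becomes $n>N$, which is precisely the hypothesis of the corollary. This yields, for all $(x,y)\in[-N,N]\times[-N,N]$ and all $n>N$,
$$
|h(x,y)-h^n(x,y)|\le \bar{M}^2\,\frac{N}{n}\,\exp(\bar{M}N).
$$
Evaluating this inequality at $(x,y)=(Y^{n,m}_t,B^n_t)$, which is legitimate by the previous paragraph, gives the assertion. Since the bound in Lemma \ref{l:4.7} is independent of $m$ and the constant $N$ of Lemma \ref{l:4.10} is independent of $m$, the estimate holds uniformly in $m$ as stated.

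Two small points deserve a word, neither of them a real obstacle. First, one should observe that $(Y^{n,m}_t,B^n_t)$ in fact also lies in $[-n,n]\times[-n,n]$, so that $h^n$ at this point is genuinely the Euler interpolation (\ref{eq:4.3}) and not the value $0$ assigned off the square; this is immediate since $n>N$ and $N$ dominates both $\|B^n\|_\infty$ and $\sup_{t\in[0,T]}|Y^{n,m}_t|$ by its definition in (\ref{eq:4.56}). Second, the integer/real point concerning $N$ noted above is harmless. The genuine content of the argument was already supplied in the proofs of Lemmas \ref{l:4.7} and \ref{l:4.10}; here there is nothing beyond substitution, so I do not expect any real difficulty.
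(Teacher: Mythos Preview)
Your proposal is correct and follows exactly the paper's approach: the paper's proof reads simply ``Follows from Lemma~\ref{l:4.7} and (\ref{eq:4.52}),'' which is precisely the substitution you spell out. Your extra remarks about rounding $N$ to an integer and about $(Y^{n,m}_t,B^n_t)$ staying inside $[-n,n]^2$ so that $h^n$ is genuinely the Euler interpolation are careful bookkeeping points the paper leaves implicit.
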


\begin{proof}
 Follows from  Lemma \ref{l:4.7} and (\ref{eq:4.52}).
  \end{proof}

\begin{lemma}
\label{l:4.12}  
For every $n>N$, $m $ and $t\in[0,T]$,
\begin{equation}
\label{eq:4.57}
|Y^n_{t}-{Y}^{n,m}_t|\leq J_{n,m}T\left(\exp\left(Z_1T\right)\right)\ \ \ \text{a.s.},
\end{equation}
where $J_{n,m}$ is the random variable
\begin{align}
\label{eq:4.58}
J_{n,m}=&Z_1M_1\exp(M_2\left\|B^n\right\|_{\infty})r_m+Z_2Kn^{1+\beta}r_m\notag \\
&+ \bar{M}^2\exp(M_2\|B^n\|_{\infty})\exp{(\bar{M}N)}(M_1M_3\|B^n\|_{\infty}+M_4)\frac{N}{n},
\end{align}
and $Z_1, Z_2$ and $ N$ are the random variables  defined by 
(\ref{eq:4.25}),   (\ref{eq:4.26}) and  (\ref{eq:4.56}) respectively, and 
$r_m={T}/{m}.$
\end{lemma}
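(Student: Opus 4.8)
The plan is to use (\ref{eq:4.6}) and (\ref{eq:4.9}), together with $Y^n_0=Y^{n,m}_0=x_0$, to write
\[
Y^n_t-Y^{n,m}_t=\int_0^t\bigl(f(Y^n_s,B^n_s)-f^n(Y^{n,m}_{\eta(s)},B^n_{\eta(s)})\bigr)\,ds,
\]
where $\eta(s)=t_k$ for $t_k\le s<t_{k+1}$ denotes the left endpoint of the Euler subinterval containing $s$. I would then split the integrand by inserting the intermediate arguments $(Y^{n,m}_s,B^n_s)$, $(Y^{n,m}_{\eta(s)},B^n_s)$ and $(Y^{n,m}_{\eta(s)},B^n_{\eta(s)})$, writing $f(Y^n_s,B^n_s)-f^n(Y^{n,m}_{\eta(s)},B^n_{\eta(s)})=D_1(s)+D_2(s)+D_3(s)+D_4(s)$ with
\[
\begin{aligned}
&D_1(s)=f(Y^n_s,B^n_s)-f(Y^{n,m}_s,B^n_s),\qquad D_2(s)=f(Y^{n,m}_s,B^n_s)-f(Y^{n,m}_{\eta(s)},B^n_s),\\
&D_3(s)=f(Y^{n,m}_{\eta(s)},B^n_s)-f(Y^{n,m}_{\eta(s)},B^n_{\eta(s)}),\qquad D_4(s)=f(Y^{n,m}_{\eta(s)},B^n_{\eta(s)})-f^n(Y^{n,m}_{\eta(s)},B^n_{\eta(s)}),
\end{aligned}
\]
and estimate each $D_i$ separately.

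The terms $D_1,D_2,D_3$ are routine. By part 2 of Lemma \ref{l:4.6}, $|D_1(s)|\le Z_1|Y^n_s-Y^{n,m}_s|$, which is the contribution that Gronwall will later absorb. By the same part of Lemma \ref{l:4.6}, $|D_2(s)|\le Z_1|Y^{n,m}_s-Y^{n,m}_{\eta(s)}|$, and from (\ref{eq:4.9}) and (\ref{eq:4.50}) one has $|Y^{n,m}_s-Y^{n,m}_{\eta(s)}|\le r_mM_1\exp(M_2\|B^n\|_\infty)$, which produces the first summand of $J_{n,m}$ in (\ref{eq:4.58}). By part 3 of Lemma \ref{l:4.6}, $|D_3(s)|\le Z_2|B^n_s-B^n_{\eta(s)}|$, and Lemma \ref{l:4.2} bounds $|B^n_s-B^n_{\eta(s)}|$ by $Kn^{1+\beta}r_m$, which produces the second summand of $J_{n,m}$.

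The term $D_4$, which compares $f$ with $f^n$, is the crux. Writing out (\ref{eq:4.7}) and (\ref{eq:4.8}) and applying $|e^a-e^b|\le(e^{|a|}\vee e^{|b|})\,|a-b|$ to the exponential factors, together with $|b|\le M_1$, $b$ Lipschitz with constant $M_4$, $|\sigma'|\le M_2$ and $|\sigma''|\le M_3$, I expect to obtain
\[
|f(x,y)-f^n(x,y)|\le\exp(M_2|y|)\Bigl(M_1M_3\,|y|\sup_{|u|\le|y|}|h(x,u)-h^n(x,u)|+M_4\,|h(x,y)-h^n(x,y)|\Bigr).
\]
Since $n>N$, Lemma \ref{l:4.10} places $(Y^{n,m}_{\eta(s)},B^n_{\eta(s)})$, and every point $(Y^{n,m}_{\eta(s)},u)$ with $|u|\le|B^n_{\eta(s)}|$, inside $[-N,N]^2$, so Lemma \ref{l:4.7} (used exactly as in the proof of Corollary \ref{c:4.11}) bounds every occurrence of $|h-h^n|$ there by $\bar{M}^2(N/n)\exp(\bar{M}N)$; combined with $|y|\le\|B^n\|_\infty$ this gives precisely the third summand of $J_{n,m}$.

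Collecting the four estimates yields $|f(Y^n_s,B^n_s)-f^n(Y^{n,m}_{\eta(s)},B^n_{\eta(s)})|\le Z_1|Y^n_s-Y^{n,m}_s|+J_{n,m}$, hence
\[
|Y^n_t-Y^{n,m}_t|\le J_{n,m}T+Z_1\int_0^t|Y^n_s-Y^{n,m}_s|\,ds,
\]
and (\ref{eq:4.57}) follows from Gronwall's lemma, applied pathwise (with $Z_1$ a.s. finite by Corollary \ref{c:4.3}). I expect the only genuine difficulty to be the bookkeeping around $D_4$: one has to compare $f$ and $f^n$ at the single point $(Y^{n,m}_{\eta(s)},B^n_{\eta(s)})$ that remains after the first three substitutions, and check that all arguments of $h$ and $h^n$ appearing there stay in the square $[-N,N]^2$ where Lemma \ref{l:4.7} is available; the rest is a direct application of the preliminary lemmas and Gronwall.
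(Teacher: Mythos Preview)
Your proposal is correct and follows essentially the same approach as the paper: the four-term decomposition $D_1,\dots,D_4$ is exactly the paper's (4.60), and each term is bounded using the same preliminary lemmas (Lemmas \ref{l:4.2}, \ref{l:4.6}, \ref{l:4.7}, \ref{l:4.10}) to reach (4.68). The only cosmetic difference is that the paper applies Gronwall on each subinterval $[t_k,t_{k+1}]$ and then sums the resulting geometric-type bound, whereas you write the integral over $[0,t]$ with the step function $\eta(s)$ and apply Gronwall once; both routes yield the same estimate $J_{n,m}T\exp(Z_1T)$.
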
      

\begin{proof}
Firstly, if $0=t_0< t \leq t_1$, by (\ref{eq:4.5}), 
(\ref{eq:4.7}) and (\ref{eq:4.9}),
\begin{equation}
\label{eq:4.59}
|Y^n_{t}-{Y}^{n,m}_t| \leq \int_{t_0}^t |f(Y^{n}_{s}, B^n_{s})-f^n({Y}^{n,m}_{t_{0}}, 
B^n_{t_{0}})|ds,
\end{equation}
and
\begin{align}
\label{eq:4.60}
&|f(Y^{n}_{s}, B^n_{s}) - f^n({Y}^{n,m}_{t_{0}}, B^n_{t_{0}})|
\nonumber \\
\leq& |f(Y^{n}_{s}, B^n_{s})-f({Y}^{n,m}_s, B^n_{s})|+ |f({Y}^{n,m}_s, 
B^n_{s})-f({Y}^{n,m}_{t_{0}}, B^n_{s})|\nonumber\\
&  + |f({Y}^{n,m}_{t_{0}}, B^n_{s}) - f({Y}^{n,m}_{t_{0}}, B^n_{t_{0}})|+ 
|f({Y}^{n,m}_{t_{0}}, B^n_{t_0}) - f^n({Y}^{n,m}_{t_{0}}, B^n_{t_{0}})|.
\end{align}

By  part 2 of Lemma \ref{l:4.6}, 
\begin{equation}
\label{eq:4.61}
|f(Y^{n}_{s}, B^n_{s})-f({Y}^{n,m}_s, B^n_{s})|\leq Z_1|Y^n_{s}-{Y}^{n,m}_s|.
\end{equation}
Analogously, and by (\ref{eq:4.9}) and (\ref{eq:4.50}), for $s\leq t_1$,
\begin{align}
\label{eq:4.62}
|f({Y}^{n,m}_s, B^n_{s})-&f({Y}^{n,m}_{t_{0}}, B^n_{s})|
\leq Z_1|{Y}^{n,m}_{s}-{Y}^{n,m}_{t_{0}}|\notag\\
&\leq Z_1(s-t_0)|f^n({Y}^{n,m}_{t_{0}}, B^n_{t_0})|
\leq Z_1M_1\exp(M_2\left\|B^n\right\|_{\infty})r_m.	
\end{align}
By Lemmas \ref{l:4.2} and \ref{l:4.6},  for $t_0< s < t_1$, 
\begin{eqnarray}
\label{eq:4.63}
|f({Y}^{n,m}_{t_{0}}, B^n_{s}) - f({Y}^{n,m}_{t_{0}}, B^n_{t_{0}})|
& \leq&   Z_2|B_s^n- B^n_{t_0}| \leq  Z_2Kn^{1+\beta}(s-t_0) \nonumber \\
&\leq  &  Z_2Kn^{1+\beta}r_m.
\end{eqnarray}
Moreover, 
\begin{equation}
\label{eq:4.64}
 |f({Y}^{n,m}_{t_{0}}, B^n_{t_0}) - f^n({Y}^{n,m}_{t_{0}}, B^n_{t_{0}})|\leq I_7 + I_{8},
\end{equation}
where
\begin{eqnarray*}
I_7	&=& |b(h({Y}^{n,m}_{t_{0}}, B^n_{t_0}))|\left|\exp\left(-\int_0^{B^n_{t_0}} \sigma'(h({Y}^{n,m}_{t_{0}},u))du\right)\right.\nonumber \\
&&\hspace{3.6cm} \left. - \exp\left(-\int_0^{B^n_{t_0}} \sigma'(h^n(
{Y}^{n,m}_{t_{0}},u))du\right)\right|,\nonumber 
\end{eqnarray*}
$$
I_{8}	= \left|\exp\left(-\int_0^{B^n_{t_0}} \sigma'
(h^n({Y}^{n,m}_{t_{0}},u))du\right)\right||b(h({Y}^{n,m}_{t_{0}}, B^n_{t_0}))-b(h^n({Y}^{n,m}_{t_{0}}, B^n_{t_0}))|.
$$

Applying the mean value theorem for the exponential function and $\sigma'$, by  Lemmas \ref{l:4.7} and \ref{l:4.10}, (\ref{eq:4.17}), and boundedness of $b, \sigma'$,  then, for $n>N$,
\begin{eqnarray}
\label{eq:4.65}
I_7	&\leq& M_1M_3\exp(M_2\|B^n\|_{\infty})\int_0^{|B^n_{t_0}|}\left|h({Y}^{n,m}_{t_{0}},u)-h^n({Y}^{n,m}_{t_{0}},u)\right|du\nonumber\\
&\leq& M_1M_3\exp(M_2\|B^n\|_{\infty})\int_0^{|B^n_{t_0}|}\bar{M}^2\frac{N}{n}\exp{(\bar{M}N)}du\nonumber\\
&\leq& M_1M_3\bar{M}^2\exp(M_2\|B^n\|_{\infty})\exp{(\bar{M}N)}\|B^n\|_{\infty}\frac{N}{n}.
\end{eqnarray}

By Corollary  \ref{c:4.11}, (\ref{eq:4.17}) and  $b$ being Lipschitz, for $n>N$,
\begin{eqnarray}
\label{eq:4.66}
I_{8}	&\leq& M_4\exp(M_2\|B^n\|_{\infty})\left|h({Y}^{n,m}_{t_{0}},B^n_{t_0})-h^n({Y}^{n,m}_{t_{0}},B^n_{t_0})\right|\nonumber\\
&\leq& M_4\bar{M}^2\exp(M_2\|B^n\|_{\infty})\exp{(\bar{M}N)}\frac{N}{n}.
\end{eqnarray}

From (\ref{eq:4.64}), (\ref{eq:4.65}) and (\ref{eq:4.66}), 
\begin{eqnarray}
\label{eq:4.67}
&&|f({Y}^{n,m}_{t_{0}}, B^n_{t_0}) - f^n({Y}^{n,m}_{t_{0}}, B^n_{t_{0}})|\nonumber\\
&\leq& \bar{M}^2\exp(M_2\|B^n\|_{\infty})\exp{(\bar{M}N)}(M_1M_3\|B^n\|_{\infty}+M_4)\frac{N}{n}.
\end{eqnarray}

By (\ref{eq:4.60}), (\ref{eq:4.61}), (\ref{eq:4.62}), 
(\ref{eq:4.63}), (\ref{eq:4.67}) and (\ref{eq:4.58}), 
for $s\in[0,T]$,
\begin{equation}
\label{eq:4.68}
 |f(Y^{n}_{s}, B^n_{s}) - f^n({Y}^{n,m}_{t_{0}}, B^n_{t_{0}})|\leq Z_1|Y^n_{s}-
{Y}^{n,m}_s| + J_{n,m}.
\end{equation}

From (\ref{eq:4.59}), (\ref{eq:4.68}), for $0<t\leq t_1$,
$$|Y^n_{t}-{Y}^{n,m}_t|\leq \int_{t_0}^t Z_1|Y^n_{s}-{Y}^{n,m}_s|ds + 
\int_{t_0}^t J_{n,m}ds, $$
 and by Gronwall's lemma, for $t_0< t \leq t_1$ and $r_m={T}/{m}$,
\begin{equation}
\label{eq:4.69}
|Y^n_{t}-{Y}^{n,m}_t| \leq J_{n,m}r_m\exp\left(Z_1(t_1-t_0)\right).
\end{equation}

If $t_1< t \leq t_2$,
\begin{eqnarray}
\label{eq:4.70}
|Y^n_{t}-{Y}^{n,m}_t|&=& \left|Y_{t_1}^n + \int_{t_1}^t f(Y^{n}_{s}, B^n_{s})ds - {Y}^{n,m}_{t_{1}} - \int_{t_1}^t f^n({Y}^{n,m}_{t_{1}}, B^n_{t_{1}})ds\right|\nonumber\\
& \leq& |Y_{t_1}^n -  {Y}^{n,m}_{t_{1}}| + \int_{t_1}^t |f(Y^{n}_{s}, B^n_{s})-f^n({Y}^{n,m}_{t_{1}}, B^n_{t_{1}})|ds.
\end{eqnarray}
Proceeding similarly, taking $t_1$ instead of $t_0$, from 
(\ref{eq:4.68}),
\begin{equation}
\label{eq:4.71}
 |f(Y^{n}_{s}, B^n_{s}) - f^n({Y}^{n,m}_{t_{1}}, B^n_{t_{1}})|\leq Z_1|Y^n_{s}-
{Y}^{n,m}_s| + J_{n,m}.
\end{equation}

From (\ref{eq:4.69}), (\ref{eq:4.70}) and (\ref{eq:4.71}), for $t_1\leq t \leq t_2$,
\begin{eqnarray*}
|Y^n_{t}-{Y}^{n,m}_t|&\leq& J_{n,m}r_m\exp\left(Z_1(t_1-t_0)\right)+ \int_{t_1}^t Z_1|Y^n_{s}-{Y}^{n,m}_s|ds + \int_{t_1}^t J_{n,m} ds\nonumber\\
&\leq& J_{n,m}r_m\left(\exp\left(Z_1(t_1-t_0)\right)+1\right)+ \int_{t_1}^t Z_1|Y^n_{s}-{Y}^{n,m}_s|ds,
\end{eqnarray*}
and by  Gronwall's lemma,
\begin{eqnarray*}
|Y^n_{t}-{Y}^{n,m}_t|&\leq& J_{n,m}r_m\left(\exp\left(Z_1(t_1-t_0)\right)+1\right)\exp\left( Z_1(t_2-t_1)\right)\nonumber\\
&\leq& J_{n,m}r_m\left(\exp\left(Z_1(t_2-t_0)\right)+\exp\left( Z_1(t_2-t_1)\right)\right)\nonumber\\
&\leq& J_{n,m}r_m2\left(\exp\left(Z_1T\right)\right).
\end{eqnarray*}

Analogously, for $t_{k}<t\leq t_{k+1}$, $k=0, \cdots, m-1,$
\begin{align*}
& |Y^n_{t}-{Y}^{n,m}_t|	\nonumber\\
\leq&J_{n,m}r_m[\exp\left(Z_1(t_{k+1}-t_0)\right)+\exp\left(Z_1(t_{k+1}-t_1)\right)+\cdots +\exp\left(Z_1(t_{k+1}-t_k) \right)]\nonumber\\
\leq &J_{n,m}r_m(k+1)\left(\exp\left(Z_1T\right)\right)\leq 
J_{n,m}r_mm\left(\exp\left(Z_1T\right)\right)\nonumber\\
= & J_{n,m}T\left(\exp\left(Z_1T\right)\right),  
\end{align*}
which finishes the proof. 
\end{proof}

\begin{proposition}
\label{p:4.13}
Let $Y^n$ and ${Y}^{n, m}$ be given by (\ref{eq:4.5}) and (\ref{eq:4.9}), respectively. Then
$$P\Bigl(\limsup_{n\to \infty}\Bigl\{\|Y^n-{Y}^{n, n^2}\|_{\infty}>\alpha_n\Bigr\}\Bigr)=0,$$
where $\alpha_n$ is given by (\ref{eq:4.33}). 
\end{proposition}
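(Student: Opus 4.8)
The plan is to specialize Lemma~\ref{l:4.12} to $m=n^2$ and show that the resulting bound is, almost surely and for $n$ large, of smaller order than $\alpha_n$. Taking $m=n^2$ gives $r_{n^2}=T/n^2$, so the three contributions to $J_{n,n^2}$ in (\ref{eq:4.58}) become: a term of order $\exp(M_2\|B^n\|_\infty)\,Z_1\,n^{-2}$, a term $Z_2Kn^{1+\beta}r_{n^2}=Z_2KT\,n^{\beta-1}$, and a term of order $\exp(M_2\|B^n\|_\infty)\exp(\bar M N)(M_1M_3\|B^n\|_\infty+M_4)\,N\,n^{-1}$. Since $0<\beta<1/2$, the middle term $n^{\beta-1}$ dominates $n^{-1}$ and $n^{-2}$, so that $J_{n,n^2}\le C(\omega)\,n^{\beta-1}$ for some finite random variable $C(\omega)$, provided the random prefactors $Z_1,Z_2,\|B^n\|_\infty,N$ are controlled uniformly in $n$.

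Next I would pass to the almost sure set $A$ of (\ref{eq:4.40}). On $A$, for $n>\hat N(\omega)$ we have $\|B^n\|_\infty\le 1+\|B^H\|_\infty$ by (\ref{eq:4.53}), hence $Z_1$, $Z_2$ (from (\ref{eq:4.25})--(\ref{eq:4.26})), $\exp(M_2\|B^n\|_\infty)$ and $\exp(Z_1T)$ are all bounded by a finite random variable depending only on $\|B^H\|_\infty$, and $N=N(\omega)$ is a fixed finite random variable by (\ref{eq:4.56}). Consequently, for $n>\max\{N(\omega),\hat N(\omega)\}$ --- so that Lemma~\ref{l:4.12} and Corollary~\ref{c:4.11} apply --- the estimate (\ref{eq:4.57}) with $m=n^2$ yields
$$
\|Y^n-Y^{n,n^2}\|_\infty\le J_{n,n^2}\,T\exp(Z_1T)\le C(\omega)\,n^{\beta-1},
$$
where $C(\omega)<\infty$ does not depend on $n$.

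Finally, since
$$
\frac{n^{\beta-1}}{\alpha_n}=\frac{n^{\beta-1}}{n^{-1/2+\beta+\delta}(\log n)^{5/2}}=n^{-1/2-\delta}(\log n)^{-5/2}\longrightarrow 0
$$
as $n\to\infty$, for each $\omega\in A$ there is $n_0(\omega)$ such that $\|Y^n-Y^{n,n^2}\|_\infty\le C(\omega)n^{\beta-1}\le\alpha_n$ for all $n\ge n_0(\omega)$. Hence, for $\omega\in A$, the event $\{\|Y^n-Y^{n,n^2}\|_\infty>\alpha_n\}$ occurs for only finitely many $n$, that is, $\limsup_{n\to\infty}\{\|Y^n-Y^{n,n^2}\|_\infty>\alpha_n\}\subseteq A^c$, and since $P(A)=1$ the probability in the statement is $0$. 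The only genuine work is the bookkeeping in the first step: checking that, after setting $r_{n^2}=T/n^2$, the polynomially growing factor $n^{1+\beta}$ coming from the Lipschitz constant of $B^n$ (Lemma~\ref{l:4.2}) is beaten down to $n^{\beta-1}$, and that all the remaining random factors are $n$-independent on $A$; once that is settled the conclusion follows from the elementary limit above together with the Borel--Cantelli-type set inclusion.
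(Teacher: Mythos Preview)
Your proposal is correct and follows essentially the same approach as the paper: specialize Lemma~\ref{l:4.12} to $m=n^2$, observe that the dominant term in $J_{n,n^2}$ is of order $n^{\beta-1}$, bound the random prefactors $Z_1,Z_2,\exp(M_2\|B^n\|_\infty)$ uniformly in $n$ via $\|B^n\|_\infty\le N$ (the paper packages this directly through the random variable $N$ of (\ref{eq:4.56}) rather than invoking the set $A$ separately, but the content is identical), and then compare $n^{\beta-1}$ with $\alpha_n$. The paper phrases the last step as $P(\limsup\{Z_3>n^{1-\beta}\alpha_n\})=P(\limsup\{Z_3>n^{1/2+\delta}(\log n)^{5/2}\})=0$, which is exactly your limit $n^{\beta-1}/\alpha_n\to 0$ rewritten.
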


\begin{proof}
By Lemma \ref{l:4.10}, there is $N=N(\omega) $ given by 
(\ref{eq:4.56}) such as for  $n, m,$ 
$$({Y}^{n,m}_t, B^n_t)\in [-N, N]\times [-N, N] \quad a.s.$$

The random variables $Z_1$ and $ Z_2$ given by 
(\ref{eq:4.25}) and (\ref{eq:4.26}) are bounded uniformly in $n$, and defining $M=\max\left\{M_1, M_2, M_3, M_4, M_5\right\}$,
$$Z_1  \leq\exp(2MN)[ M^2N+M],\quad Z_2\leq 2M^2\exp(MN),$$
and by (\ref{eq:4.58}),
\begin{align}
\label{eq:4.72}
J_{n,m}\leq &\exp(3MN)(M^3N+M^2)r_m + 2M^2K\exp(MN)n^{1+\beta}r_m\notag \\
&+M^2\exp(2MN)(M^2N+M)\frac{N}{n}.	
\end{align}

From  (\ref{eq:4.57}) and (\ref{eq:4.72}), for $n> N$ and taking $m=n^2$, then $r_m={T}/{n^2}$ and
 \begin{align*}
|Y^n_{t}-{Y}^{n,n^2}_t|&\leq  J_{n,m}T\left(\exp\left(Z_1T\right)\right)\nonumber\\
&\leq \Bigl\{ \exp(3MN)(M^3N+M^2)r_m + 2M^2K\exp(MN)n^{1+\beta}r_m\Bigr.\nonumber\\ 
&\Bigl. \ \  +M^2\exp(2MN)(M^2N+M)\frac{N}{n} \Bigr\} T\exp\left(T\exp(2MN)[M^2N+M]\right)\nonumber\\
&\leq Z_3\frac{1}{n^{1-\beta}},
\end{align*}
where the random variable $Z_3$ is defined by
\begin{eqnarray*}
Z_3&=&\left\{\exp(3MN)(M^3N+M^2)T + 2M^2K\exp(MN)T \right.\\
	 &&\left. \ +M^2N\exp(2MN)(M^2N+M)\right\}T\exp\left(T\exp(2MN)[M^2N+M]\right).
\end{eqnarray*}

Therefore we obtain
\begin{align*}
 &P\Bigl(\limsup_{n\to \infty}\bigl\{\|Y^n-{Y}^{n, n^2}\|_{\infty}>\alpha_n\bigr\}\Bigr)\\
\leq&  P\Bigl(\limsup_{n\to \infty}\bigl\{Z_3 >n^{1-\beta}\alpha_n\bigr\}\Bigr)
= P\Bigl(\limsup_{n\to \infty}\bigl\{Z_3 >n^{1/2+\delta}(\log n)^{5/2}\bigr\}\Bigr)=0. 
\end{align*}
\end{proof}

\begin{corollary}
\label{c:4.14}
Let $Y$ and ${Y}^{n, m}$  be given by (\ref{eq:2.11}) and (\ref{eq:4.9}), 
respectively. Then
$$P\Bigl(\limsup_{n\to \infty}\{\|Y-{Y}^{n, n^2}\|_{\infty}>\alpha_n\}\Bigr)=0,$$
where $\alpha_n$ is given by (\ref{eq:4.33}).
\end{corollary}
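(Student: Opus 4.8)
The plan is to obtain Corollary \ref{c:4.14} as an immediate consequence of Propositions \ref{p:4.8} and \ref{p:4.13}, combined by a triangle inequality. The only mild point to watch is that the triangle inequality produces a factor $2$ in the rate, which I would absorb by first passing to a strictly smaller admissible exponent.

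First I would fix an auxiliary parameter $\delta'$ with $0<\delta'<\delta$. Since $\delta<\beta$ and $\beta+\delta<1/2$, the pair $(\beta,\delta')$ again satisfies the constraints in (\ref{eq:4.33}), so both Proposition \ref{p:4.8} and Proposition \ref{p:4.13} may be applied with $\alpha'_n:=n^{-1/2+\beta+\delta'}(\log n)^{5/2}$ in place of $\alpha_n$. Because $\delta'<\delta$, one has $2\alpha'_n\leq\alpha_n$ for all $n$ large, say $n\geq n_0$. Then, for $n\geq n_0$, the triangle inequality $\left\|Y-{Y}^{n,n^2}\right\|_{\infty}\leq\left\|Y-Y^n\right\|_{\infty}+\left\|Y^n-{Y}^{n,n^2}\right\|_{\infty}$ shows that if both summands are at most $\alpha'_n$ then the left side is at most $2\alpha'_n\leq\alpha_n$, whence
$$\left\{\left\|Y-{Y}^{n,n^2}\right\|_{\infty}>\alpha_n\right\}\subseteq\left\{\left\|Y-Y^n\right\|_{\infty}>\alpha'_n\right\}\cup\left\{\left\|Y^n-{Y}^{n,n^2}\right\|_{\infty}>\alpha'_n\right\}.$$

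Taking $\limsup_{n\to\infty}$ in the sense of sets, using that $\limsup$ of a union is the union of the $\limsup$'s and that altering finitely many sets (those with $n<n_0$) does not change a $\limsup$, I would conclude
$$\limsup_{n\to\infty}\left\{\left\|Y-{Y}^{n,n^2}\right\|_{\infty}>\alpha_n\right\}\subseteq\limsup_{n\to\infty}\left\{\left\|Y-Y^n\right\|_{\infty}>\alpha'_n\right\}\cup\limsup_{n\to\infty}\left\{\left\|Y^n-{Y}^{n,n^2}\right\|_{\infty}>\alpha'_n\right\}.$$
By Proposition \ref{p:4.8} (applied with $\delta'$) the first set on the right has probability $0$, and by Proposition \ref{p:4.13} (applied with $\delta'$) so does the second; hence the union, and therefore the left-hand set, has probability $0$, which is the claim. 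There is essentially no obstacle in this argument: the two propositions do all the work, and the only care needed is the bookkeeping of the constant $2$ via the slightly smaller exponent $\delta'$ (equivalently, one could split the bound as $\alpha_n/2+\alpha_n/2$ and note $\alpha_n/2$ is of the same order as $\alpha_n$). $\qed$
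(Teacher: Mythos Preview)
Your proof is correct and follows essentially the same approach as the paper: a triangle inequality combined with Propositions \ref{p:4.8} and \ref{p:4.13}. The only cosmetic difference is in absorbing the factor $2$---the paper splits as $\alpha_n/2+\alpha_n/2$ and re-enters the proofs of those propositions (replacing $Z$ by $2Z$ and $Z_3$ by $2Z_3$), whereas you pass to a smaller $\delta'<\delta$, which is arguably cleaner since it uses the propositions as black boxes; you even note the equivalence of the two devices in your final parenthetical remark.
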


\begin{proof}
 By the proofs of Propositions \ref{p:4.8} and \ref{p:4.13}, and replacing $Z$ and $Z_3$ by $2Z$ and $2Z_3$, respectively,
\begin{align*}
&P\Bigl(\limsup_{n\to \infty}\{\|Y-{Y}^{n, n^2}\|_{\infty}>\alpha_n\}\Bigr)\\
\leq& P\Bigl(\limsup_{n\to \infty}\{\|Y-Y^{n}\|_{\infty}>\alpha_n/2\}\Bigr)
+P\Bigl(\limsup_{n\to \infty}\{\|Y^n-{Y}^{n, n^2}\|_{\infty} >\alpha_n/2\}\Bigr)=0. 
\end{align*}
\end{proof}

We define for each $n=1,2, \dots$,
\begin{equation}
\label{eq:4.73}
\tilde{X}^n_t=h(Y_t^n, B_t^n), 
\end{equation}
where $B^n$ and $Y^n$ are given by (\ref{eq:2.6})-(\ref{eq:2.7}) and (\ref{eq:4.5}), respectively.
\begin{proposition}
\label{p:4.15}
For any $\tilde{C}>0,$
$$P\Bigl(\limsup_{n\to \infty}\{\|X-\tilde{X}^n\|_{\infty}>\tilde{C}\alpha_n\}\Bigr)=0,$$
where $X$ is given by (\ref{eq:2.9}) and $\alpha_n$ is given by 
(\ref{eq:4.33}).
\end{proposition}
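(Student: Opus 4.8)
The plan is to bound $\|X-\tilde X^n\|_\infty$ by a fixed (random) constant times $\|B^H-B^n\|_\infty$ on a full-probability event, and then invoke (\ref{eq:2.8}), exactly in the spirit of the proofs of Propositions \ref{p:4.8} and \ref{p:4.13}. First I would split, for $t\in[0,T]$,
\[
|X_t-\tilde X^n_t|=|h(Y_t,B^H_t)-h(Y^n_t,B^n_t)|\le |h(Y_t,B^H_t)-h(Y^n_t,B^H_t)|+|h(Y^n_t,B^H_t)-h(Y^n_t,B^n_t)|.
\]
The first term is at most $\exp(M_2\|B^H\|_\infty)\|Y-Y^n\|_\infty$ by (\ref{eq:4.19}); the second is at most $M_5\|B^H-B^n\|_\infty$, since $\partial h/\partial x_2(x_1,x_2)=\sigma(h(x_1,x_2))$ is bounded by $M_5$ (see (\ref{eq:4.1}) and (\ref{eq:4.15})). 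Hence
\[
\|X-\tilde X^n\|_\infty\le \exp(M_2\|B^H\|_\infty)\,\|Y-Y^n\|_\infty+M_5\,\|B^H-B^n\|_\infty .
\]

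The obstacle is that $\exp(M_2\|B^H\|_\infty)$ is a random prefactor, so Proposition \ref{p:4.8} cannot be quoted verbatim; I would instead use the sharper, pathwise output of its proof. On the event $A$ of (\ref{eq:4.40}) there is $\hat N=\hat N(\omega)$ such that, for $n>\hat N$, the proof of Proposition \ref{p:4.8} yields $\|Y-Y^n\|_\infty\le Z\|B^H-B^n\|_\infty$ with $Z=\alpha\exp(T\kappa)$ an $n$-independent, a.s.\ finite random variable. Substituting, for $\omega\in A$ and $n>\hat N$,
\[
\|X-\tilde X^n\|_\infty\le W\,\|B^H-B^n\|_\infty,\qquad W:=Z\exp(M_2\|B^H\|_\infty)+M_5,
\]
where $W=W(\omega)<\infty$ a.s.\ and does not depend on $n$.

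To finish I would exploit the strictly positive slack $\delta$ in $\alpha_n=n^{-1/2+\beta+\delta}(\log n)^{5/2}$. Since $\tilde C n^{\delta}/W\to\infty$ a.s., for $\omega\in A$ and all $n$ large enough one has $\{\|X-\tilde X^n\|_\infty>\tilde C\alpha_n\}\subseteq\{\|B^H-B^n\|_\infty>(\tilde C n^{\delta}/W)\,n^{-1/2+\beta}(\log n)^{5/2}\}\subseteq\{\|B^H-B^n\|_\infty>n^{-1/2+\beta}(\log n)^{5/2}\}$, whence
\[
\limsup_{n\to\infty}\{\|X-\tilde X^n\|_\infty>\tilde C\alpha_n\}\cap A\ \subseteq\ \limsup_{n\to\infty}\{\|B^H-B^n\|_\infty>n^{-1/2+\beta}(\log n)^{5/2}\},
\]
and the right-hand side has probability $0$ by (\ref{eq:2.8}); since $P(A)=1$, the proposition follows. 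The only genuinely delicate point is this book-keeping with the random constant $W$ and the use of the $n^{\delta}$ gap to absorb it — precisely the device already employed in Propositions \ref{p:4.8} and \ref{p:4.13}; the two Lipschitz bounds for $h$ are routine.
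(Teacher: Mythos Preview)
Your proof is correct and follows essentially the same route as the paper: the same splitting $|h(Y_t,B^H_t)-h(Y^n_t,B^H_t)|+|h(Y^n_t,B^H_t)-h(Y^n_t,B^n_t)|$, the same Lipschitz bounds (\ref{eq:4.19}) and $|\partial h/\partial x_2|\le M_5$, and the same $n^{\delta}$-absorption device to handle the random prefactors before invoking (\ref{eq:2.8}). The only cosmetic difference is that the paper treats the two pieces separately (applying Proposition~\ref{p:4.8} with $\delta$ replaced by $\delta/2$ for the first and (\ref{eq:2.8}) directly for the second), whereas you first combine them into a single bound $W\|B^H-B^n\|_\infty$ by reusing the pathwise estimate from the proof of Proposition~\ref{p:4.8}; both ways are equivalent.
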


\begin{proof}
  For convenience of notation we put $\tilde{C}=1$. Then
\begin{equation}
\label{eq:4.74}
|X_t-\tilde{X}^n_t|=|h(Y_t, B^H_t)-h(Y_t^n, B_t^n)|\leq I_9(t) + I_{10}(t),
\end{equation}
where 
$$I_9(t)=|h(Y_t, B^H_t)-h(Y_t^n, B^H_t)|$$
$$I_{10}(t)=|h(Y_t^n, B^H_t)-h(Y_t^n, B_t^n)|.$$

By (\ref{eq:4.19}),
\begin{equation}
\label{eq:4.75}
I_9(t)	\leq Z_4\|Y-Y^n\|_{\infty},
\end{equation}
where $Z_4=\exp(M_2\left\|B^H\right\|_{\infty})$,

Proceeding similary as in the proof of (\ref{eq:4.48}), then from  
Proposition \ref{p:4.8},
\begin{eqnarray*}
&&P\Bigl(\limsup_{n\to \infty}\{Z_4\left\|Y-Y^n\right\|_{\infty}>\alpha_n/2\}\Bigr)\\
 & \leq  & P\Bigl(\limsup_{n\to \infty}\{\|Y-Y^n\|_{\infty}>
n^{-1/2+\beta+\delta/2}(\log n)^{5/2}\}\Bigr)=0.
\end{eqnarray*}
Thus, by (\ref{eq:4.75}),
\begin{equation}
\label{eq:4.76}	
P\Bigl(\limsup_{n\to \infty}\{\|I_9\|_{\infty}>\alpha_n/2\}\Bigr)=0.
\end{equation}

From (\ref{eq:2.10}), boundedness of $\sigma$, and the mean value theorem, 
$$I_{10}(t)\leq M_5\|B^H-B^n\|_{\infty},$$
and by  (\ref{eq:2.8}),
\begin{equation}
\label{eq:4.77}	
P\Bigl(\limsup_{n\to \infty}\{\|I_{10}\|_{\infty}>\alpha_n/2\}\Bigr)=0.
\end{equation}
The result follows from (\ref{eq:4.74}), (\ref{eq:4.76}) and 
(\ref{eq:4.77}). 
\end{proof}

For the final step of the proof of the theorem we go to (\ref{eq:4.11}).
	
By (\ref{eq:2.9}), (\ref{eq:4.73}), (\ref{eq:4.12}) and
Proposition \ref{p:4.15},
\begin{equation}
\label{eq:4.78}
P\Bigl(\limsup_{n\to \infty}\{\|H_{1}\|_{\infty}>\alpha_n/4\}\Bigr)=0.
\end{equation}

By (\ref{eq:4.52}) there is $N=N(\omega)$ given by 
(\ref{eq:4.56}) such that for each $n,m$,
\begin{equation}
\label{eq:4.79}
({Y}^{n,m}_t, B^n_t)\in [-N, N]\times [-N, N] \quad{\rm a.s.}
\end{equation}
From (\ref{eq:4.13}) and (\ref{eq:4.19}),
\begin{equation}
\label{eq:4.80}
H_{2}(t)\leq \exp(M_2\|B^n\|_{\infty})|Y_t^n-{Y}_t^{n, n^2}|
\leq Z_5|Y_t^n-{Y}_t^{n, n^2}|,
\end{equation}
where $Z_5=\exp(M_2N)$. 
By Proposition \ref{p:4.13} and (\ref{eq:4.80}), 
\begin{eqnarray}
\label{eq:4.81}
&&P\Bigl(\limsup_{n\to \infty}\left\{\left\|H_{2}\right\|_{\infty}>\alpha_n/4\right\}\Bigr)\leq 
P\Bigl(\limsup_{n\to \infty}\{4Z_5\|Y^n-{Y}^{n, n^2}\|_{\infty}>\alpha_n\}\Bigr) \nonumber\\ 
&\leq & P\Bigl(\limsup_{n\to \infty}\{\|Y^n-{Y}^{n, n^2}\|_{\infty}>
n^{-1/2+\beta+\delta/2}(\log n)^{5/2}\}\Bigr)=0.
\end{eqnarray}

On account of (\ref{eq:4.79}) and (\ref{eq:4.14}), for all $n>N$, applying  Lemma 
\ref{l:4.7}, 
$$
H_{3}(t)\leq  Z_6\frac{1}{n},$$
where  $Z_6=\bar{M}^2N\exp(\bar{M}N)$.
Then,
\begin{align}
\label{eq:4.82}
P\Bigl(\limsup_{n\to \infty}\left\{\left\|H_{3}\right\|_{\infty}>\alpha_n\right\}\Bigr)&\leq 
P\Bigl(\limsup_{n\to \infty}\Bigl\{Z_6>n\alpha_n\Bigr\}\Bigr)\nonumber\\
&\leq P\Bigl(\limsup_{n\to \infty}\{Z_6>n^{1/2+\beta+\delta}(\log n)^{5/2}\}\Bigr)=0.
\end{align}

 The final result follows from (\ref{eq:4.11}), (\ref{eq:4.12}), 
(\ref{eq:4.13}), (\ref{eq:4.14}), (\ref{eq:4.78}), (\ref{eq:4.81})  and 
(\ref{eq:4.82}).
 
\bibliographystyle{amsplain}

\end{document}